\documentclass [11pt,a4paper]{amsart}

\usepackage{amssymb} 
\usepackage{amsfonts} 
\usepackage{amsmath}
\usepackage{amsthm} 
\usepackage{tikz}
\usepackage{stmaryrd}
\usepackage[utf8]{inputenc}
\usepackage[all]{xy}
\SelectTips{cm}{}

\usepackage[pagebackref,
    ,pdfborder={0 0 0}
    ,urlcolor=black,a4paper,hypertexnames=false]{hyperref}
\hypersetup{pdfauthor={Clara L\"oh},pdftitle=Residually finite categories}

\usepackage{caption}

\newtheorem{lem}{Lemma}[section]
\newtheorem{thm}[lem]{Theorem}
\newtheorem{prop}[lem]{Proposition}
\newtheorem{cor}[lem]{Corollary} 

\theoremstyle{definition}
\newtheorem{defi}[lem]{Definition}

\newtheorem{exa}[lem]{Example}

\newcommand{\N}{\ensuremath {\mathbb{N}}}
\newcommand{\F}{\ensuremath {\mathbb{F}}}

\newcommand{\Q} {\ensuremath {\mathbb{Q}}}
\newcommand{\Z} {\ensuremath {\mathbb{Z}}}


 \makeatletter
 \newcommand\norm{\bBigg@{0.8}}

 \makeatother
 \newcommand{\inparens}[2][flex]{\csname #1l\endcsname(#2%
                                 \csname #1r\endcsname)\mathclose{}}
 \newcommand{\inangles}[2][flex]{\csname #1l\endcsname\langle#2%
                                 \csname #1r\endcsname\rangle\mathclose{}} 
 \newcommand{\genrel}[3][norm]{\csname #1l\endcsname\langle %
                              #2 %
                              \,\csname#1m\endcsname|\, %
                              #3 %
                              \csname#1r\endcsname\rangle\mathclose{}}

\def\args{\;\cdot\;}

\DeclareMathOperator{\GL}{GL}
\DeclareMathOperator{\FSet}{{\sf FSet}}
\DeclareMathOperator{\Aut}{Aut}

\DeclareMathOperator{\id}{id}
\DeclareMathOperator{\Ob}{Ob}
\DeclareMathOperator{\Mor}{Mor}
\DeclareMathOperator{\OpMod}{{\sf Mod}}
\DeclareMathOperator{\OpFMod}{{\sf FMod}}

\def\LMod#1{{}_{#1}\!\OpMod}
\def\LModf#1{{}_{#1}\!\OpFMod}
\DeclareMathOperator{\FSym}{FSym}

\title{Residually finite categories}

\author{Clara L\"oh}

\subjclass[2010]{20E18, 20E26, 18A99}

\keywords{residually finite category, residually finite group}

\date{\today.\ \copyright{\ C.~L\"oh 2019}. 
    This work was supported by the CRC~1085 \emph{Higher Invariants} 
    (Universit\"at Regensburg, funded by the DFG).}

\begin{document}

\begin{abstract}
  We introduce the notion of residual finiteness for categories.
  In analogy with the group-theoretic setting, we prove that
  free categories and finitely generated subcategories of finite-dimensional
  vector spaces are residually finite. Moreover, finitely generated
  residually finite categories are Hopfian and finitely presented
  residually finite categories have solvable word problem.
\end{abstract}

\maketitle

\section{Introduction}

Classical mathematics is often concerned with infinite, potentially
huge, structures. From a more computational point of view, it is
therefore essential to ask which properties can be tested through
transformations to finite structures. For example, in group theory,
this is captured by the notion of residual finiteness~\cite{magnus}: A group is
residually finite, if equality of group elements can be tested via
group homomorphisms to finite groups~\cite[Definition~2.1.1,
  Proposition~2.1.2]{csc}.

\begin{defi}[residually finite group]\label{def:rfgroup}
  A group~$G$ is \emph{residually finite}, if for all~$f$,~$g \in G$
  with~$f \neq g$, there exists a \emph{finite} group~$D$ and a group
  homomorphism~$\varphi \colon G \longrightarrow D$ with
  \[ \varphi(f) \neq \varphi(g).
  \]
\end{defi}

In this note, in analogy with the group-theoretic setting, we
introduce the following notion of residual finiteness for categories,
based on testing via functors to finite categories:

\begin{defi}[residually finite category]\label{def:rfcat}
  A category~$C$ is \emph{residually finite}, if for all
  morphisms~$f$ and $g$ in~$C$ with~$f \neq g$, 
  there exists a
  \emph{finite} category~$D$ and a functor~$F \colon C \longrightarrow
  D$ with
  \[ F(f) \neq F(g).
  \]
\end{defi}

Here, a category~$C$ is \emph{finite} if $\Ob(C)$ is finite
and for all~$X,Y \in \Ob(C)$ also~$\Mor_C(X,Y)$ is finite.
In Section~\ref{subsec:moreq}, we will say more about the
notions of equality of morphisms and finiteness in categories.

For groups (and the canonical interpretations of groups as categories
with a single object), the notions of residual finiteness from
Definition~\ref{def:rfgroup} and Definition~\ref{def:rfcat} coincide
(Proposition~\ref{prop:groupcat}). Moreover,
Definition~\ref{def:rfcat} also subsumes a notion of residual
finiteness for monoids (via categories that only contain a single
object)~\cite{golubov} and groupoids (categories all of whose morphisms are
isomorphisms).

\subsection*{(Non-)Examples}

Important examples of residually finite groups are free groups~\cite[Theorem~2.3.1]{csc} and
finitely generated linear groups~\cite{malcev}. Analogously, we 
show that the following categories are residually finite:
\begin{itemize}
\item free categories (Proposition~\ref{prop:freecat})
\item finitely generated subcategories of the category of
  finite-dimensional vector spaces over a field
  (Corollary~\ref{cor:fingenlin})
\end{itemize}
Groups are residually finite if and only if they embed into a product
of finite groups~\cite[Corollary~2.2.6]{csc}. Similarly, we prove that a small category
is residually finite if and only if it is equivalent to a subcategory
of a product of finite categories (Corollary~\ref{cor:finprod}).

In contrast, the following categories are \emph{not} residually finite
(and thus are not amenable to systematic testing via functors to finite categories):
\begin{itemize}
\item the category of finite sets (Proposition~\ref{prop:fset})
\item the category of finite-dimensional vector spaces over a field
  (Proposition~\ref{prop:fmod})
\item the simplex category~$\Delta$ (Proposition~\ref{prop:simplex})
\end{itemize}

\subsection*{Using residual finiteness}

Two classical applications of residual finiteness in group theory are:
\begin{itemize}
\item All finitely generated residually finite groups are Hopfian~\cite[Theorem~2.4.3]{csc}
  (i.e., every surjective endomorphism is already an isomorphism); 
  this shows, for instance, that every self-map of an aspherical oriented closed
  connected manifold with residually finite group induces an isomorphism
  on fundamental groups and thus is a homotopy equivalence.

  Similarly, this holds also in other algebraic situations, e.g., for
  rings and modules~\cite{lewin,varadarajan}.
\item All finitely presented residually finite groups have solvable
  word problem~\cite{mostowski}.
\end{itemize}
We establish the corresponding versions for residually finite
categories: All finitely generated residually finite categories are
Hopfian (Theorem~\ref{thm:hopfian}). All finitely presented residually
finite categories have solvable word problem
(Theorem~\ref{thm:wordproblem}); this might be of interest when
modelling calculi, deductional systems, or rewriting systems
in categories~\cite{lambekscott}.


\subsection*{Organisation of this article}

In Section~\ref{sec:setup}, we clarify our setup of category theory.
Section~\ref{sec:basics} contains basic inheritance properties of
residual finiteness of categories. Examples of residually finite
categories are discussed in Section~\ref{sec:examples}. Finally,
in Section~\ref{sec:hopfian}, we show that finitely generated residually
finite categories are Hopfian and that finitely presented residually
finite categories have solvable word problem.

\section{Setup}\label{sec:setup}

\subsection{Categories}

For simplicity and concreteness, we will use classical class-set
theory (such as~NBG~\cite{smullyanfitting}) as ambient theory for
category theory; all categories will be \emph{locally small} (i.e.,
while the objects of a category form a \emph{class}, the morphisms
between any two objects form a \emph{set}). A category is \emph{small}
if the class of objects is a set. Reference to the axiom of choice
will be made explicit. Most of this note can be adapted in a
straightforward manner to more synthetic settings or settings with
more stages of ``sizes'' of sets.

\subsection{Equality and finiteness}\label{subsec:moreq}

Equality of morphisms and finiteness of categories play a central role
in the definition of residual finitenes (Definition~\ref{def:rfcat}).
As equality of objects in categories is a delicate subject, we
briefly comment on two popular choices:

\begin{itemize}
\item Using equality of objects: Let $X,X', Y,Y' \in \Ob(C)$
  and let $f \in \Mor_C(X,Y)$, $g \in \Mor_C(X',Y')$. Then
  the morphisms $f$ and $g$ in~$C$ are considered equal if
  and only if $X = X'$ and $Y = Y'$ and $f = g$ (in the
  set~$\Mor_C(X,Y) = \Mor_C(X',Y')$). In particular, morphisms
  are supposed to know (at least implicitly) about their domain and target
  objects.
  
  In this setting, we can use the naive notion of finiteness of
  categories: A category~$C$ is finite if $\Ob(C)$ is finite and if
  for all~$X,Y \in \Ob(C)$, the set $\Mor_C(X,Y)$ is finite.
\item Without using equality of objects: If we want to avoid to speak
  of equality of objects (in order to obtain equivalence-robust
  notions), we will only define (in)equality for morphisms in the same
  morphism set. In this version, it does not make sense to talk
  about (in)equality of morphisms that have different domains/targets.

  Moreover, in this setting, a category should be considered to be
  finite if it is weakly finite in the following sense: A category~$C$
  is \emph{weakly finite} if $\Ob(C)$ contains only finitely many
  isomorphism classes of objects and if for all~$X,Y \in \Ob(C)$, the
  set~$\Mor_C(X,Y)$ is finite.
\end{itemize}

We will adopt the first, naive, semantics (using equality of objects);
in particular, we also will talk about finite generation of
categories, etc.\ in the naive sense.  We can then compare the
definition using the first semantics and the second semantics. In this
case, both interpretations result in the same notion of residual
finiteness:

\begin{prop}\label{prop:sametype}
  Let $C$ be a category, let $X$, $X'$, $Y$, $Y' \in \Ob(C)$, and let
  $f \in \Mor_C(X,Y)$, $g \in \Mor_C(X',Y')$ with~$X \neq X'$ or~$Y \neq Y'$.
  Then there exists a finite category~$D$ and a
  functor~$F \colon C \longrightarrow D$ with~$F(f) \neq F(g)$.
\end{prop}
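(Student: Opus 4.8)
The plan is to exploit the naive semantics fixed in Section~\ref{subsec:moreq}: there, a morphism records (at least implicitly) its domain and target, so two morphisms with different domains, or with different targets, are automatically unequal. Hence it suffices to produce a finite category~$D$ and a functor~$F\colon C\longrightarrow D$ that separates the relevant objects. By symmetry (or by passing to~$C^{\mathrm{op}}$, which exchanges the roles of domain and target) I may assume~$X\neq X'$, and then I only need~$F(X)\neq F(X')$ in~$\Ob(D)$: that already forces~$F(f)$ and~$F(g)$ to have different domains, hence~$F(f)\neq F(g)$.

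For this I would take~$D$ to be the \emph{codiscrete} (``indiscrete'') category on the two-element set~$\{0,1\}$: it has objects~$0$ and~$1$, exactly one morphism~$Z\to W$ for every ordered pair~$(Z,W)\in\{0,1\}^2$, with the forced composition and the forced identities. This is plainly a finite category (and in particular weakly finite, so the argument works under either reading of ``finite''). The key point is that, since~$D$ has exactly one morphism between any two of its objects, \emph{every} function on object sets extends uniquely to a functor into~$D$: preservation of composition and of identities is automatic, because any two parallel morphisms of~$D$ coincide.

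Concretely, in the case~$X\neq X'$ I would define~$F$ on objects by~$F(X'):=1$ and~$F(Z):=0$ for all~$Z\in\Ob(C)$ with~$Z\neq X'$, and let~$F$ send each morphism of~$C$ to the unique morphism of~$D$ between the images of its domain and target. By the previous paragraph this~$F$ is a functor. Since~$X\neq X'$, we get~$F(X)=0\neq 1=F(X')$, so~$F(f)$ has domain~$0$ while~$F(g)$ has domain~$1$; in the adopted semantics this already yields~$F(f)\neq F(g)$. In the case~$Y\neq Y'$ one argues identically, now setting~$F(Y'):=1$ and~$F(Z):=0$ otherwise, so that~$F(f)$ and~$F(g)$ have different targets.

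There is essentially no obstacle here; the only things worth checking are that the codiscrete category on~$\{0,1\}$ is a well-defined (associative, unital) category and that the chosen object assignment really does extend to a functor, both of which are immediate from the observation that~$D$ has at most one morphism between any pair of objects. The conceptual content of the statement is simply that ``having a different domain or target'' is itself a finite amount of data about a morphism, and this data can always be read off by a functor to a (very small) finite category.
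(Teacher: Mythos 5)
Your proof is correct and takes essentially the same route as the paper: the paper uses the category associated with the complete directed graph on~$\{X,X',Y,Y'\}$ --- which is exactly a codiscrete finite category --- and sends each morphism of~$C$ to the unique morphism between the images of its endpoints, so that $F(f)$ and $F(g)$ differ in domain or target. Your only variation is to use the two-object codiscrete category with a case distinction ($X\neq X'$ or $Y\neq Y'$) instead of the four-object one, which changes nothing essential.
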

\begin{proof}
  We consider the complete directed graph on the
  set~$\{X,X',Y,Y'\}$ (with at most four elements) and its associated
  category~$D$, which is finite (Section~\ref{subsec:graph}). We then
  define the following functor~$F \colon C \longrightarrow D$:
  \begin{itemize}
  \item on objects:
    For~$Z \in \Ob(C)$, we set
    \[ F(Z) :=
    \begin{cases}
      Z & \text{if~$Z \in \{X,X',Y,Y'\}$}\\
      X & \text{otherwise}.
    \end{cases}
    \]
  \item on morphisms: For~$Z,Z' \in \Ob(C)$ and $h \in \Mor_C(Z,Z')$,
    we define~$F(h)$ as the unique morphism in~$D$ from~$F(Z)$ to~$F(Z')$.
  \end{itemize}
  By construction, $F(f) \neq F(g)$ (because the target or the domain
  objects in~$D$ are not equal). 
\end{proof}

In particular, this also shows that different objects can always
be separated by functors to finite categories.

\begin{prop}
  Let $C$ be a category, let $f$ and $g$ be morphisms in~$C$ with~$f \neq g$, 
  let $D$ be a weakly finite category, and let $F \colon C \longrightarrow D$
  be a functor with~$F(f) \neq F(g)$. Then there exists a finite category~$D'$
  and a functor~$F' \colon C \longrightarrow D'$ with~$F'(f) \neq F'(g)$.
\end{prop}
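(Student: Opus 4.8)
The plan is to replace the weakly finite category $D$ by one of its skeleta, which is genuinely finite, and to compose $F$ with the canonical comparison functor onto this skeleton. The only care needed is to ensure that $f$ and $g$ are not accidentally identified: either they already have different domains or targets, or their images land in a single morphism set of $D$, in which case a \emph{faithful} comparison functor suffices, and full faithfulness of a skeleton inclusion is more than enough.

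First I would reduce via a case distinction on the behaviour of $f$ and $g$ in $C$. If $f$ and $g$ do not have the same domain in $C$, or do not have the same target in $C$, then Proposition~\ref{prop:sametype} already yields a finite category~$D'$ and a functor~$F' \colon C \longrightarrow D'$ with~$F'(f) \neq F'(g)$, and we are done; note that $D$ and $F$ are not even used in this case. Hence we may assume that $f$ and $g$ have a common domain $A$ and a common target $B$ in~$C$. Then $F(f)$ and $F(g)$ both lie in~$\Mor_D(F(A), F(B))$, and the hypothesis~$F(f) \neq F(g)$ means precisely that they are distinct elements of this one morphism set of~$D$.

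Next I would construct $D'$. Since $D$ is weakly finite, its objects fall into only finitely many isomorphism classes; picking one representative in each class (a choice over a finite set) gives a full subcategory~$D' \subseteq D$ with finitely many objects, and each~$\Mor_{D'}(X,Y) = \Mor_D(X,Y)$ is finite, so~$D'$ is finite. Using the axiom of choice, pick for every object~$Z$ of~$D$ an isomorphism~$u_Z \colon Z \longrightarrow \rho(Z)$ to the chosen representative~$\rho(Z)$ of its class (with~$u_Z = \id_Z$ if~$Z$ already is a representative). Then~$R(Z) := \rho(Z)$ on objects and~$R(h) := u_{Z'} \circ h \circ u_Z^{-1}$ for~$h \in \Mor_D(Z,Z')$ defines a functor~$R \colon D \longrightarrow D'$ (functoriality is the routine computation that conjugation by isomorphisms respects composition and identities), and conjugation by isomorphisms is a bijection on morphism sets, so~$R$ is fully faithful; in particular~$R$ is injective on~$\Mor_D(F(A),F(B))$.

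Finally, I would set~$F' := R \circ F \colon C \longrightarrow D'$. By the reduction, $F(f) \neq F(g)$ as elements of~$\Mor_D(F(A),F(B))$, so injectivity of~$R$ there gives~$F'(f) = R(F(f)) \neq R(F(g)) = F'(g)$, as required. The only genuinely subtle point is the appeal to the axiom of choice to produce the comparison functor~$R$ (only finitely much choice is needed to select the representative objects, but selecting the isomorphisms~$u_Z$ ranges over the possibly proper class~$\Ob(D)$); everything else is bookkeeping, and it is worth stressing that the argument consumes only faithfulness of~$R$, not its essential surjectivity.
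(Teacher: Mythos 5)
Your proof is correct and follows essentially the same route as the paper: replace the weakly finite category~$D$ by an equivalent finite skeleton via a (fully) faithful comparison functor and compose with~$F$. Your preliminary case distinction via Proposition~\ref{prop:sametype} is a nice extra touch of care, since it handles the situation where $F(f)$ and $F(g)$ lie in different morphism sets of~$D$ (where faithfulness alone would not immediately suffice), a point the paper's shorter argument passes over silently.
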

\begin{proof}
  Every weakly finite category is equivalent to a finite category (one
  can use the axiom of choice or use a more constructive notion of
  weak finiteness that includes such an equivalence). Hence, there is
  a finite category~$D'$ and a faithful functor~$G \colon D
  \longrightarrow D'$.  We can then take~$F' := G \circ F$.
\end{proof}

\subsection{Graphs and quivers}\label{subsec:graph}

A \emph{directed graph} is a pair~$(V,E)$ consisting of a set~$V$ (the
\emph{vertices}) and a set~$E \subset V \times V$ (the \emph{edges}).
If $X := (V,E)$ is a directed graph, then the category~$C_X$ associated
with~$X$ consists of
\begin{itemize}
\item objects: We set~$\Ob(C_X) := V$.
\item morphisms: If $u$, $v$ are objects in~$\Ob(C_X)$, then we set
  \[ \Mor_{C_X}(u,v) :=
  \begin{cases}
    \bullet & \text{if there exists a directed path from~$u$ to~$v$ in~$X$}\\
    \emptyset & \text{otherwise}.
  \end{cases}
  \]
\item composition of morphisms: The composition of morphism is uniquely
  determined by the definition of the morphism sets and the fact that
  concatenation of directed paths in~$X$ witnesses that the composition
  of composable morphisms exists. If $v \in \Ob(C_X)$, then the unique
  element of~$\Mor_{C_X}(v,v)$ is the identity morphism of~$v$.
\end{itemize}

More generally, a \emph{quiver} is a quadruple~$(V,E, s, t)$ consisting of a set~$V$
(the \emph{vertices}), a set~$E$ (the \emph{edges}), and two maps~$s,t \colon E \longrightarrow V$
(the \emph{source} and \emph{target} map, respectively).

\section{Basic properties}\label{sec:basics}

In order to work efficiently with residually finite categories, we
first establish some basic inheritance results.

\subsection{Isomorphisms, equivalences, subcategories}

\begin{prop}\label{prop:iso}
  Let $C$, $C'$ be isomorphic categories. If $C$ is residually finite,
  then also $C'$ is residually finite.
\end{prop}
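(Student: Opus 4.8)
The plan is to transport the residual-finiteness data across the given isomorphism. Let $\Phi \colon C \longrightarrow C'$ be an isomorphism of categories, with inverse functor $\Psi \colon C' \longrightarrow C$, so that $\Psi \circ \Phi = \id_C$ and $\Phi \circ \Psi = \id_{C'}$ on the nose (both on objects and on morphisms). First I would take two morphisms $f'$ and $g'$ in $C'$ with $f' \neq g'$. Applying $\Psi$, I obtain morphisms $f := \Psi(f')$ and $g := \Psi(g')$ in $C$. These satisfy $f \neq g$: indeed, if we had $f = g$, then applying $\Phi$ would give $f' = \Phi(\Psi(f')) = \Phi(\Psi(g')) = g'$, contradicting $f' \neq g'$.

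Next, since $C$ is residually finite, there is a finite category $D$ and a functor $F \colon C \longrightarrow D$ with $F(f) \neq F(g)$. I would then set $F' := F \circ \Psi \colon C' \longrightarrow D$. This is a functor to a finite category, and
\[
  F'(f') = F(\Psi(f')) = F(f) \neq F(g) = F(\Psi(g')) = F'(g'),
\]
so $F'$ separates $f'$ and $g'$. Since $f'$, $g'$ were arbitrary distinct morphisms of $C'$, this shows $C'$ is residually finite.

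I do not expect any genuine obstacle here; the argument is a routine transport along the isomorphism, and the only point requiring (minimal) care is checking that $\Psi$ reflects inequality of morphisms, which follows immediately from functoriality of $\Phi$ and the identity $\Phi \circ \Psi = \id_{C'}$. One remark worth making is that the statement is a formal consequence of the later, stronger fact that residual finiteness is preserved under equivalences of (small) categories together with Corollary~\ref{cor:finprod}; but since the present proposition is stated for honest isomorphisms and is used early, giving the direct two-line transport argument above is cleaner and avoids any dependence on later material or on the axiom of choice.
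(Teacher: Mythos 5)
Your argument is correct and is exactly the paper's proof (which is stated in one line): compose a separating functor $F\colon C\to D$ with an isomorphism $C'\to C$. Your added check that the isomorphism reflects inequality of morphisms is the only detail the paper leaves implicit, and it is verified correctly.
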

\begin{proof}
  Composing separating functors with an isomorphism~$C' \longrightarrow C$
  proves the claim.
\end{proof}

More generally, the same argument shows that residual finiteness
is inherited under equivalences of categories:

\begin{prop}\label{prop:equiv}
  Let $C$ and $C'$ be equivalent categories. If $C$ is residually finite,
  then also $C'$ is residually finite.
\end{prop}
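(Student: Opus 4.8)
The plan is to argue exactly as in the proof of Proposition~\ref{prop:iso}: I would precompose separating functors for $C$ with one of the functors realising the equivalence $C \simeq C'$. So let $f$ and $g$ be morphisms in $C'$ with $f \neq g$, and I look for a finite category $D$ and a functor $F \colon C' \longrightarrow D$ with $F(f) \neq F(g)$.

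First I would dispose of the case in which $f$ and $g$ have different domain objects or different target objects in $C'$. Here Proposition~\ref{prop:sametype}, applied to the category $C'$ itself, already produces the required $D$ and $F$, and nothing about $C$ or the equivalence is needed. So I may assume $f, g \in \Mor_{C'}(X,Y)$ for some fixed $X, Y \in \Ob(C')$, with $f \neq g$ in this one morphism set.

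Next I would fix an equivalence between $C$ and $C'$; this in particular supplies a functor $G \colon C' \longrightarrow C$ which, being part of an equivalence, is fully faithful (faithfulness alone already follows from the natural isomorphism relating $G$ to a quasi-inverse). Faithfulness of $G$ yields $G(f) \neq G(g)$ in $\Mor_C(G(X),G(Y))$. Since $C$ is residually finite, there are a finite category $D$ and a functor $E \colon C \longrightarrow D$ with $E(G(f)) \neq E(G(g))$, and then $F := E \circ G \colon C' \longrightarrow D$ does the job.

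I expect the only genuinely delicate point to be the bookkeeping around equality of objects: an equivalence need not be invertible on objects, so $G$ may identify distinct objects of $C'$, and hence the separation of morphisms with differing domains or targets cannot simply be transported along $G$. That is precisely why I would treat that case separately via Proposition~\ref{prop:sametype} for $C'$; the remaining case then reduces cleanly to faithfulness of $G$ together with residual finiteness of $C$, just as in Proposition~\ref{prop:iso}.
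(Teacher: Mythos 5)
Your proof is correct and follows essentially the same route as the paper: compose a separating functor for $C$ with the faithful functor $G \colon C' \longrightarrow C$ coming from the equivalence. Your extra case distinction via Proposition~\ref{prop:sametype} (for morphisms of $C'$ with differing domains or targets, which $G$ could conceivably conflate) is a careful refinement of a point the paper's proof leaves implicit, not a different argument.
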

\begin{proof}
  Let $G \colon C' \longrightarrow C$ be an equivalence of categories 
  and let $f$ and $g$ be morphisms in~$C'$ with~$f \neq g$. As an
  equivalence of categories, $G$ is faithful; hence, $G(f) \neq G(g)$
  in~$C$. Because $C$ is residually finite, there exists a finite
  category~$D$ and a functor~$F \colon C \longrightarrow D$ with~$F(
  G(f)) \neq F(G(g))$. Thus, the functor~$F \circ G \colon C' \longrightarrow D$
  separates~$f$ and $g$.
\end{proof}

\begin{cor}\label{cor:skeleton}
  Let $C$ be a category and let $C'$ be a skeleton of~$C$. Then $C$
  is residually finite if and only if $C'$ is residually finite.
\end{cor}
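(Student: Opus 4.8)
The plan is to reduce the statement to Proposition~\ref{prop:equiv} by invoking the standard categorical fact that a category is equivalent to any of its skeletons. First, I would recall the relevant definitions: a skeleton $C'$ of $C$ is a full subcategory of $C$ such that every object of $C$ is isomorphic to exactly one object of $C'$. Consequently, the inclusion functor $\iota \colon C' \longrightarrow C$ is fully faithful (because $C'$ is a full subcategory) and essentially surjective (by the skeleton property), hence an equivalence of categories. The construction of a quasi-inverse requires choosing, for each object $X$ of $C$, an isomorphism between $X$ and the unique object of $C'$ lying in the isomorphism class of $X$; this uses the axiom of choice.

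Next, I would exploit that ``being equivalent'' is a symmetric relation on categories: since $\iota$ exhibits $C'$ as equivalent to $C$, the category $C$ is also equivalent to $C'$. Applying Proposition~\ref{prop:equiv} once in each direction then gives both implications: if $C$ is residually finite, then so is $C'$, and if $C'$ is residually finite, then so is $C$. This completes the argument.

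I do not anticipate any substantial obstacle. The only subtlety is the (standard, and in this note already admissible) appeal to the axiom of choice when passing from a fully faithful, essentially surjective functor to an honest equivalence; this mirrors the choice-based arguments used elsewhere in the paper, and should simply be flagged in the write-up. Alternatively, one could give a self-contained proof by restricting separating functors along $\iota$ and along a chosen quasi-inverse, but routing everything through Proposition~\ref{prop:equiv} is cleaner and avoids repeating the faithfulness bookkeeping.
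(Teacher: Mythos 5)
Your proposal is correct and follows essentially the same route as the paper: observe that a skeleton is equivalent to the original category (with the axiom of choice, or a constructive notion of skeleton, supplying the equivalence) and then apply Proposition~\ref{prop:equiv} in both directions. The extra detail you give about the inclusion functor being fully faithful and essentially surjective is just an expansion of the same argument.
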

\begin{proof}
  As a skeleton of~$C$, the category~$C'$ is equivalent to~$C$
  (depending on the setting, we can either use the axiom of choice or
  a constructive notion of skeleton that requires the existence of
  such an equivalence). We then only need to use the fact that
  residual finiteness is inherited under equivalences of categories
  (Proposition~\ref{prop:equiv}).
\end{proof}

\begin{prop}\label{prop:subcat}
  Subcategories of residually finite categories are residually finite.
\end{prop}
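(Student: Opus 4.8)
The plan is to argue directly from the definition: given a subcategory $C' \subseteq C$ with $C$ residually finite, and two distinct morphisms $f \neq g$ in $C'$, I want to produce a finite category $D$ and a functor $F' \colon C' \to D$ with $F'(f) \neq F'(g)$. Since $C'$ is a subcategory of $C$, the inclusion $\iota \colon C' \hookrightarrow C$ is a functor, and $f$, $g$ are also morphisms of $C$; moreover $\iota(f) = f \neq g = \iota(g)$ viewed in $C$ (here I use the naive semantics from Section~\ref{subsec:moreq}, so this inequality makes sense with matching domains and targets). By residual finiteness of $C$, there is a finite category $D$ and a functor $F \colon C \to D$ separating $f$ and $g$, i.e.\ $F(f) \neq F(g)$.

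The remaining step is simply to precompose: set $F' := F \circ \iota \colon C' \to D$. This is a composite of functors, hence a functor, $D$ is finite, and $F'(f) = F(f) \neq F(g) = F'(g)$. This separates $f$ and $g$, so $C'$ is residually finite. That is essentially the whole argument.

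I do not anticipate a genuine obstacle here; the proof is a one-line restriction argument, entirely parallel to the corresponding fact for residually finite groups (subgroups of residually finite groups are residually finite). The only point requiring a moment's care is making sure the inequality $f \neq g$ transports correctly between $C'$ and $C$ under the naive (equality-of-objects) semantics we adopted — but since $C'$ is literally a subcategory, the objects and morphisms of $C'$ are objects and morphisms of $C$, so $F \circ \iota$ evaluated on $f$, $g$ is literally $F$ evaluated on $f$, $g$, and the separation is immediate. I would write this up in two or three sentences.
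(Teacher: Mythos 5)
Your argument is correct and is exactly the paper's proof (the paper states it in one line: restrict/precompose the separating functor with the inclusion of the subcategory). Your write-up just spells out the same restriction argument in slightly more detail.
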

\begin{proof}
  This is immediate from the definition (we only need to restrict the
  corresponding separating functors).
\end{proof}

As in the case of groups, in a residually finite category, we can separate
any finite number of morphisms:

\begin{prop}
  Let $C$ be a residually finite category, let $n \in \N_{\geq 2}$, and
    let $f_1, \dots, f_n$ be $n$~different morphisms in~$C$. Then there
    exists a finite category~$D$ and a functor~$F \colon C \longrightarrow D$
    such that the morphisms~$F(f_1), \dots, F(f_n)$ are all different.
\end{prop}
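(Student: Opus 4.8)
The plan is to copy the classical group-theoretic proof: separate the $f_i$ in pairs and then glue the resulting separating functors together using a finite product of finite categories.

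Concretely, the first step is to apply residual finiteness $\binom{n}{2}$ times. For each pair of indices $i, j$ with $1 \le i < j \le n$, the hypothesis $f_i \neq f_j$ together with residual finiteness of $C$ yields a finite category~$D_{i,j}$ and a functor~$F_{i,j} \colon C \longrightarrow D_{i,j}$ with~$F_{i,j}(f_i) \neq F_{i,j}(f_j)$. (Note that the $f_i$ may have different domains or targets; this causes no trouble, as residual finiteness is stated for arbitrary pairs of distinct morphisms, cf.\ also Proposition~\ref{prop:sametype}.)

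The second step is to form the product category~$D := \prod_{1 \le i < j \le n} D_{i,j}$, whose objects are tuples of objects of the factors and whose morphisms and composition are defined componentwise. Since this is a finite product of finite categories, $D$ is again finite: it has only finitely many objects, and for objects~$(X_{i,j})_{i<j}$ and~$(Y_{i,j})_{i<j}$ the morphism set is~$\prod_{i<j} \Mor_{D_{i,j}}(X_{i,j}, Y_{i,j})$, a finite product of finite sets. The functors~$F_{i,j}$ assemble into a single functor~$F \colon C \longrightarrow D$ given componentwise by~$F := (F_{i,j})_{i<j}$.

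It remains to check that $F$ separates all of~$f_1, \dots, f_n$, which is the only point needing a (short) argument. Let $i \neq j$; without loss of generality~$i < j$. The $(i,j)$-component of~$F(f_i)$ is~$F_{i,j}(f_i)$ and the $(i,j)$-component of~$F(f_j)$ is~$F_{i,j}(f_j)$; these differ by the choice of~$F_{i,j}$, so~$F(f_i) \neq F(f_j)$. Hence~$F(f_1), \dots, F(f_n)$ are pairwise distinct. The only mild obstacle is the bookkeeping around finiteness of the product and the observation that a componentwise-defined functor separates a pair of morphisms as soon as a single component does; both are routine.
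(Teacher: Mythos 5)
Your proposal is correct and is essentially the paper's own argument: separate each pair $f_i \neq f_j$ by a functor to a finite category and combine the finitely many separating functors into a single functor to the (finite) product category. The extra verification that a single disagreeing component suffices is fine and matches what the paper leaves implicit.
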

\begin{proof}
  Because $C$ is residually finite, for all~$j,k \in \{1,\dots, n\}$
  with~$j < k$, there exists a finite category~$D_{j,k}$ and a
  functor~$F_{j,k} \colon C \longrightarrow D_{j,k}$ with
  \[ F_{j,k}(f_j) \neq F_{j,k}(f_k).
  \]
  Then also the product category
  \[ D := \prod_{k=1}^n \prod_{j=1}^{k-1} D_{j,k}
  \]
  is finite and the product functor
  $F := \prod_{k=1}^n \prod_{j=1}^{k-1} F_{j,k} \colon C \longrightarrow D
  $ 
  has the desired property.
\end{proof}

\subsection{Products}

\begin{prop}\label{prop:prods}
  \hfil
  \begin{enumerate}
  \item If $C$ and $D$ are residually finite categories, then also~$C \times D$
    is a residually finite category.
  \item If $I$ is a set and $(C_i)_{i \in I}$ is a family of residually finite small
    categories, then also~$\prod_{i \in I} C_i$ is residually finite.
  \end{enumerate}
\end{prop}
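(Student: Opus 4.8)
The plan is to separate morphisms in a product category by composing separating functors with the canonical projection functors; this mirrors the group-theoretic proof that a product of residually finite groups is residually finite.

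For part~(1), I would start with two distinct morphisms $(f_1, f_2)$ and $(g_1, g_2)$ in $C \times D$. A morphism in $C \times D$ is a pair consisting of a morphism in~$C$ and a morphism in~$D$ (each carrying its respective domain and target data), and two such pairs are equal exactly when both coordinates agree; hence the assumption $(f_1,f_2) \neq (g_1,g_2)$ forces $f_1 \neq g_1$ in~$C$ or $f_2 \neq g_2$ in~$D$. By symmetry, assume $f_1 \neq g_1$. Residual finiteness of~$C$ then yields a finite category~$E$ and a functor $F \colon C \longrightarrow E$ with $F(f_1) \neq F(g_1)$. Composing with the projection functor $\pi_C \colon C \times D \longrightarrow C$ gives a functor $F \circ \pi_C \colon C \times D \longrightarrow E$ to the finite category~$E$ that sends $(f_1,f_2)$ to $F(f_1)$ and $(g_1,g_2)$ to $F(g_1)$, hence separates them.

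For part~(2), I would first observe that, since $I$ is a set and each $C_i$ is small, the product $\prod_{i \in I} C_i$ is again a (small) category: its class of objects is the set $\prod_{i \in I} \Ob(C_i)$, and each of its morphism sets is a product, indexed by~$I$, of morphism sets of the~$C_i$. Given distinct morphisms $f = (f_i)_{i \in I}$ and $g = (g_i)_{i \in I}$ in this product, the same coordinatewise description of equality of morphisms shows that there is an index $i_0 \in I$ with $f_{i_0} \neq g_{i_0}$ in~$C_{i_0}$. Applying residual finiteness of~$C_{i_0}$ produces a finite category~$D$ and a functor $F \colon C_{i_0} \longrightarrow D$ with $F(f_{i_0}) \neq F(g_{i_0})$, and then $F \circ \pi_{i_0} \colon \prod_{i \in I} C_i \longrightarrow D$ separates $f$ and~$g$.

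The argument is essentially routine; the only point that needs a little care is the bookkeeping around equality of morphisms (and hence of their domain and target objects) in a product, in order to be sure that $f \neq g$ really does localise to a single coordinate. In part~(2), one should also keep track of why smallness is assumed: it is needed only to guarantee that $\prod_{i \in I} C_i$ is a legitimate category in our class-set-theoretic framework, not in the separation argument itself.
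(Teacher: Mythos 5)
Your proposal is correct and follows essentially the same route as the paper: locate a coordinate where the morphisms differ, separate it in that factor, and compose the separating functor with the projection functor. The paper simply treats part~(2) and notes that part~(1) is analogous, whereas you spell out both; the extra remarks on coordinatewise equality and on the role of smallness are fine but not needed beyond what the paper already assumes.
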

\begin{proof}
  We only prove the second part (the first part can be proved in the
  same way).  Let $f$ and $g$ be morphisms in~$C := \prod_{i \in I}
  C_i$ with~$f \neq g$.  By definition of~$C$, there exist
  families~$(f_i)_{i \in I}$ and $(g_i)_{i \in I}$, where $f_i$ and
  $g_i$ are morphisms in~$C_i$ with~$f = (f_i)_{i \in I}$ and $g =
  (g_i)_{i\in I}$.  As $f \neq g$, there is an~$i \in I$ with~$f_i
  \neq g_i$.  Because $C_i$ is residually finite, there exists a
  finite category~$D$ and a functor~$F \colon C_i \longrightarrow D$
  with~$F(f_i) \neq F(g_i)$. Let $\pi_i \colon C \longrightarrow C_i$
  denote the projection functor. Then the composition~$\overline F:=
  F \circ \pi_i \colon C \longrightarrow D$ satisfies
  \[ \overline F(f) = F(f_i) \neq F(g_i) = \overline F(g),
  \]
  as desired.
\end{proof}

\begin{cor}\label{cor:finprod}
  Let $C$ be a small category. Then the following are equivalent:
  \begin{enumerate}
  \item The category~$C$ is residually finite.
  \item The category~$C$ is equivalent to a subcategory of a
    product (over a set) of finite categories.
  \end{enumerate}
\end{cor}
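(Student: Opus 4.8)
The plan is to prove the two implications of Corollary~\ref{cor:finprod} separately. The direction $(2) \Rightarrow (1)$ is essentially a bookkeeping exercise: by Proposition~\ref{prop:prods}(2), a product over a set of finite categories is residually finite (finite categories being trivially residually finite, e.g.\ via the identity functor); by Proposition~\ref{prop:subcat}, subcategories of residually finite categories are residually finite; and by Proposition~\ref{prop:equiv}, residual finiteness is inherited under equivalences. Chaining these three facts gives that any category equivalent to a subcategory of a product of finite categories is residually finite. I would spell this out in two or three sentences.

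For the direction $(1) \Rightarrow (2)$, the idea mirrors the group-theoretic proof that a residually finite group embeds into a product of finite quotients. First I would fix a small residually finite category $C$. For each \emph{ordered pair} $(f,g)$ of distinct morphisms of $C$ (there is only a \emph{set} of such pairs, since $C$ is small), residual finiteness supplies a finite category $D_{f,g}$ and a functor $F_{f,g}\colon C \longrightarrow D_{f,g}$ with $F_{f,g}(f) \neq F_{f,g}(g)$. Let $I$ be the index set of all such pairs, set $D := \prod_{(f,g) \in I} D_{f,g}$, and let $F := \prod_{(f,g)\in I} F_{f,g} \colon C \longrightarrow D$ be the product functor. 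By construction $F$ is injective on morphisms: if $f \neq g$ then the component functor indexed by $(f,g)$ already separates them. One must also record that $F$ is injective on objects — but this is automatic in our naive semantics, since by Proposition~\ref{prop:sametype} (or by a direct argument using the complete-graph category on two objects) distinct objects of $C$ are separated by some finite category, so we may enlarge the index set to include one separating functor per ordered pair of distinct objects as well. Then $F$ is an \emph{injective-on-objects, faithful} functor, hence an isomorphism onto its image subcategory $F(C) \subseteq D$, and $C$ is therefore isomorphic — in particular equivalent — to a subcategory of a product of finite categories.

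The main subtlety, and the place I would be most careful, is the interaction between ``subcategory'' and the size and semantics issues flagged in Section~\ref{subsec:moreq}. Specifically: (a) one needs $C$ small precisely so that the class of pairs of distinct morphisms forms a set, making $\prod_{(f,g)} D_{f,g}$ a legitimate (small) category — this is why the hypothesis appears and why the same statement without smallness would require the ``equivalent to a subcategory'' phrasing to be interpreted more liberally; (b) one must check that the image of a faithful, injective-on-objects functor really is a subcategory (closed under composition and containing the relevant identities), which is routine; and (c) strictly, the product functor $F$ being faithful does not by itself make it injective on objects, which is why the separation of objects must be folded into the index set — alternatively one can cite Proposition~\ref{prop:sametype} to note objects are automatically separated. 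Modulo these points, the proof is short, and I would present $(1)\Rightarrow(2)$ as the substantive half and $(2)\Rightarrow(1)$ as the formal half.
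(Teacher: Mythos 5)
Your proposal is correct and takes essentially the same route as the paper: index over the set of ordered pairs of distinct morphisms (a set since $C$ is small), map into the product of the separating finite categories, and deduce the converse by chaining Propositions~\ref{prop:prods}, \ref{prop:subcat} and~\ref{prop:equiv}. Your extra enlargement of the index set to separate objects is harmless but in fact redundant under the paper's naive semantics: distinct objects $X \neq X'$ have distinct identity morphisms, so the pair $(\id_X,\id_{X'})$ already lies in the index set and forces $F(X) \neq F(X')$, which yields injectivity on objects (and hence that the image is a genuine subcategory isomorphic to~$C$) automatically --- a point the paper's own proof leaves implicit and which you rightly flag.
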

\begin{proof}
  \emph{Ad~$(1) \Longrightarrow (2)$.}
  Let $C$ be residually finite. We consider the index set
  \[ I := \bigl\{ (f,g) \bigm| \text{$f$, $g$ morphisms in~$C$ with~$f \neq g$} \bigr\}
  \]
  ($C$ is small, so this is a set). Because $C$ is
  residually finite, for each~$i = (f,g) \in I$, there exists a finite
  category~$D_i$ and a functor~$F_i \colon C \longrightarrow D_i$ with
  \[ F_i(f) \neq F_i(g).
  \]
  Then, the family~$(F_i)_{i \in I}$ defines a functor~$F \colon C
  \longrightarrow \prod_{i \in I} C_i$, which is faithful (by
  construction). More precisely, the existence of such a functor is
  guaranteed by the axiom of choice.

  Hence, $C$ is equivalent to a subcategory (namely the image category of~$F$)
  of the product~$\prod_{i \in I} C_i$ of finite categories.
    
  \emph{Ad~$(2) \Longrightarrow (1)$.}
  Products of finite categories are residually finite (Proposition~\ref{prop:prods}),
  subcategories of residualy finite categories are residually finite (Proposition~\ref{prop:subcat}),
  and residual finiteness is preserved under equivalences (Proposition~\ref{prop:equiv}).
\end{proof}

\section{Basic examples}\label{sec:examples}

\subsection{Groups and groupoids}

If $G$ is a group, then we can consider the associated category~$C_G$,
which consists of a single object~$\bullet$ and whose morphisms are
defined by~$\Mor_{C_G}(\bullet,\bullet) := G$ (with the composition given
by the composition in~$G$). For groups, the residual finiteness notions
in Definition~\ref{def:rfgroup} and Definition~\ref{def:rfcat} coincide: 

\begin{prop}\label{prop:groupcat}
  Let $G$ be a group. Then $G$ is residually finite if and only if
  the associated category~$C_G$ is residually finite.
\end{prop}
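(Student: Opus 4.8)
The plan is to prove the two implications separately, in each case translating directly between group homomorphisms to finite groups and functors to finite categories.

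For the implication ``$C_G$ residually finite $\Longrightarrow$ $G$ residually finite'', I would start with $f$, $g \in G$ with $f \neq g$. These are morphisms $\bullet \to \bullet$ in~$C_G$, so by residual finiteness of~$C_G$ there is a finite category~$D$ and a functor~$F \colon C_G \longrightarrow D$ with~$F(f) \neq F(g)$. Set $X := F(\bullet)$. Restricting~$F$ to morphism sets yields a monoid homomorphism $G = \Mor_{C_G}(\bullet,\bullet) \longrightarrow \Mor_D(X,X)$ (it respects composition and the identity by functoriality). Since functors carry invertible morphisms to invertible morphisms and every element of~$G$ is invertible, the image of this homomorphism is contained in the automorphism group~$\Aut_D(X)$, which is finite, being a subset of the finite set~$\Mor_D(X,X)$. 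Thus $F$ induces a group homomorphism $G \longrightarrow \Aut_D(X)$ into a finite group that separates~$f$ and~$g$, witnessing residual finiteness of~$G$.

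For the converse, let $f$ and $g$ be morphisms in~$C_G$ with~$f \neq g$. By construction of~$C_G$ both are elements of~$G$ (with domain and target~$\bullet$), so $f \neq g$ in~$G$. Residual finiteness of~$G$ provides a finite group~$E$ and a homomorphism~$\varphi \colon G \longrightarrow E$ with~$\varphi(f) \neq \varphi(g)$. The associated category~$C_E$ is finite (one object, and $\operatorname{card}(E) < \infty$ morphisms), and $\varphi$ induces a functor~$\Phi \colon C_G \longrightarrow C_E$ (the identity on objects, $\varphi$ on morphisms) with~$\Phi(f) = \varphi(f) \neq \varphi(g) = \Phi(g)$. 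Hence $C_G$ is residually finite. The only point requiring a little care --- the main (minor) obstacle --- is the first direction: the separating category~$D$ need not be a one-object category, so a priori the image of~$G$ only lands in the endomorphism monoid~$\Mor_D(X,X)$; the observation that functors preserve invertibility is what lets us descend to the finite automorphism group and recover an honest homomorphism of groups. Everything else is routine bookkeeping.
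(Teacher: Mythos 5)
Your proof is correct and follows essentially the same route as the paper: a group homomorphism to a finite group induces a functor to the associated finite one-object category, and conversely a separating functor~$F \colon C_G \longrightarrow D$ restricts to a group homomorphism~$G \longrightarrow \Aut_D(F(\bullet))$ into a finite group. Your explicit remark that functors preserve invertibility (so the image lands in~$\Aut_D(F(\bullet))$ rather than merely in the endomorphism monoid) is exactly the point the paper uses implicitly, so there is nothing to change.
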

\begin{proof}
  Let $G$ be residually finite and let $f$, $g$ be morphisms in~$C_G$
  with~$f \neq g$; in particular, $f,g \in \Mor_{C_G}(\bullet,\bullet)
  = G$. Because $G$ is residually finite, there is a finite group~$D$
  and a group homomorphism~$\varphi \colon G \longrightarrow D$
  with~$\varphi(f) \neq \varphi(g)$. The homomorphism~$\varphi$
  induces a functor~$F \colon C_G \longrightarrow C_D$ mapping the
  only object~$\bullet$ of~$C_G$ to the one of~$C_D$ and using~$\varphi$
  on the morphisms:
  \begin{align*}
    \Mor_{C_G}(\bullet,\bullet) = G & \longrightarrow H = \Mor_{C_D}(\bullet,\bullet)
    \\
    h & \longmapsto \varphi(h).
  \end{align*}
  As $D$ is finite, also the category~$C_D$ is finite. Moreover, by construction,
  \[ F(f) = \varphi(f) \neq \varphi(g) = F(g).
  \]
  Hence, the category~$C_G$ is residually finite.

  Conversely, let the category~$C_G$ be residually finite and let $f,g
  \in G$ with~$f \neq g$. Because $C_G$ is residually finite and $G =
  \Mor_{C_g}(\bullet, \bullet)$, there exists a finite category~$D$
  and a functor~$F \colon C_G \longrightarrow D$ with~$F(f) \neq
  F(g)$. We then consider the (finite) group 
  \[ H := \Aut_D(X),
  \]
  where $X := F(\bullet)$; the functor~$F$ induces a group homomorphism
  \begin{align*}
    \varphi \colon G = \Aut_{C_G}(\bullet) & \longrightarrow \Aut_D(X) = H \\
    h & \longmapsto F(h).
  \end{align*}
  Because the category~$D$ is finite, also the group~$H$ is finite. Moreover,
  by construction~$\varphi(f) = F(f) \neq F(g) = \varphi(g)$. Hence, the group~$G$
  is residually finite.
\end{proof}

\begin{exa}[finitary symmetric group]\label{exa:fsym}
  Let $\FSym_{\infty}$ be the group of permutations of~$\N$ with finite support.
  Then $\FSym_{\infty}$ is \emph{not} residually finite
  (the subgroup of even permutations in~$\FSym_\infty$ is simple and infinite).
  A similar consideration will show that the category~$\FSet$ of finite sets
  is \emph{not} residually finite (Proposition~\ref{prop:fset}).
\end{exa}

\begin{cor}\label{cor:aut}
  Let $C$ be a category and let $X \in \Ob(C)$. If $C$ is residually finite,
  then $\Aut_C(X)$ is a residually finite group.
\end{cor}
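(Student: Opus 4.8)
The plan is to reduce Corollary~\ref{cor:aut} to Proposition~\ref{prop:groupcat} by sandwiching $\Aut_C(X)$ between categories we already understand. First I would observe that $\Aut_C(X)$, viewed as a group, gives rise to its associated one-object category $C_{\Aut_C(X)}$, and that there is an obvious faithful embedding of this category into $C$: on the single object it sends $\bullet$ to $X$, and on morphisms it is the identity on the set $\Aut_C(X) \subseteq \Mor_C(X,X)$. In the naive semantics adopted in Section~\ref{subsec:moreq}, this is genuinely a subcategory of $C$ (it is closed under composition and contains $\id_X$).

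Next I would invoke Proposition~\ref{prop:subcat}: since $C$ is residually finite and $C_{\Aut_C(X)}$ is (isomorphic to) a subcategory of $C$, the category $C_{\Aut_C(X)}$ is residually finite. Finally, I would apply the group-to-category comparison, Proposition~\ref{prop:groupcat}, in the direction ``$C_G$ residually finite $\Rightarrow$ $G$ residually finite'', with $G = \Aut_C(X)$, to conclude that $\Aut_C(X)$ is a residually finite group. That chains together as: $C$ residually finite $\xrightarrow{\text{Prop.~\ref{prop:subcat}}}$ $C_{\Aut_C(X)}$ residually finite $\xrightarrow{\text{Prop.~\ref{prop:groupcat}}}$ $\Aut_C(X)$ residually finite.

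Alternatively, and perhaps even more directly, one can argue without explicitly naming the subcategory: given $f \neq g$ in $\Aut_C(X)$, these are in particular distinct morphisms in $C$, so residual finiteness of $C$ supplies a finite category $D$ and a functor $F \colon C \to D$ with $F(f) \neq F(g)$; then restricting $F$ to automorphisms gives a group homomorphism $\Aut_C(X) \to \Aut_D(F(X))$ into a finite group (finite because $D$ is finite) separating $f$ and $g$. This is essentially the ``converse'' half of the proof of Proposition~\ref{prop:groupcat} applied verbatim. I would probably present the short sandwich argument as the main proof and mention this unwinding as a remark, or simply write ``combine Proposition~\ref{prop:subcat} and Proposition~\ref{prop:groupcat}.''

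I do not anticipate any real obstacle here: the only point requiring a sentence of care is the verification that $C_{\Aut_C(X)}$ is legitimately a subcategory of $C$ under the chosen semantics (morphisms remembering their domain and target), and that finiteness of $D$ forces finiteness of $\Aut_D(F(X))$ — both of which are immediate. The corollary is really just a packaging of the two previously established facts.
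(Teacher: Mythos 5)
Your argument is correct and is essentially the paper's own proof: the paper likewise identifies the subcategory of $C$ consisting of $X$ and its automorphisms with $C_{\Aut_C(X)}$ (via Proposition~\ref{prop:subcat} and Proposition~\ref{prop:iso}) and then applies Proposition~\ref{prop:groupcat}. Your direct unwinding via $\Aut_D(F(X))$ is just the converse half of Proposition~\ref{prop:groupcat} made explicit, so nothing further is needed.
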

\begin{proof}
  The subcategory~$C'$ of~$C$ consisting of the object~$X$ and the $C$-auto\-morphisms of~$X$
  is isomorphic to~$C_{\Aut_C(X)}$. If $C$ is residually finite, then also this subcategory~$C'$
  is residually finite (Proposition~\ref{prop:subcat}); thus, also~$C_{\Aut_C(X)}$ is residually
  finite (Proposition~\ref{prop:iso}). Therefore, the group~$\Aut_C(X)$ is residually finie
  (Proposition~\ref{prop:groupcat}).
\end{proof}

In general, the converse of Corollary~\ref{cor:aut} does \emph{not}
hold (Proposition~\ref{prop:fset},
Proposition~\ref{prop:simplex}). However, for groupoids (i.e., small
categories all of whose morphisms are isomorphisms), we obtain:

\begin{cor}\label{cor:groupoid}
  Let $C$ be a groupoid.
  \begin{enumerate}
  \item
    If $C$ is connected and $X \in \Ob(C)$, then $C$ is residually finite if and only
    if the group~$\Aut_C(X)$ is residually finite.
  \item
    The following are equivalent:
    \begin{enumerate}
    \item The category~$C$ is residually finite.
    \item For each~$X \in \Ob(C)$, the group~$\Aut_C(X)$ is residually finite.
    \end{enumerate}
  \end{enumerate}
\end{cor}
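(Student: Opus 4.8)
The plan is to reduce everything to results already available — Corollary~\ref{cor:aut}, Proposition~\ref{prop:groupcat}, Proposition~\ref{prop:equiv}, and Proposition~\ref{prop:sametype} — so that no genuinely new argument is required, only some bookkeeping with connected components. \emph{Part~(1).} The implication ``$C$ residually finite $\Rightarrow$ $\Aut_C(X)$ residually finite'' is precisely Corollary~\ref{cor:aut}. For the converse I would use connectedness to exhibit an equivalence $C \simeq C_{\Aut_C(X)}$. Using the axiom of choice (the objects of a groupoid form a set), pick for each~$Z \in \Ob(C)$ an isomorphism~$\varphi_Z \colon X \to Z$ in~$C$, normalised so that~$\varphi_X = \id_X$; these exist because~$C$ is connected. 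Then the assignment sending every object of~$C$ to the unique object~$\bullet$ of~$C_{\Aut_C(X)}$ and a morphism~$h \in \Mor_C(Z,Z')$ to~$\varphi_{Z'}^{-1} \circ h \circ \varphi_Z \in \Aut_C(X)$ is a functor~$G \colon C \to C_{\Aut_C(X)}$; a one-line computation shows that it is full and faithful, and it is trivially essentially surjective, hence an equivalence. Now if $\Aut_C(X)$ is residually finite, then $C_{\Aut_C(X)}$ is residually finite by Proposition~\ref{prop:groupcat}, and therefore so is~$C$ by Proposition~\ref{prop:equiv}.

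\emph{Part~(2).} Here $(a) \Rightarrow (b)$ is again Corollary~\ref{cor:aut} applied to each object. For $(b) \Rightarrow (a)$, let $f$ and $g$ be morphisms in~$C$ with~$f \neq g$. If they have different domains or different targets, Proposition~\ref{prop:sametype} already supplies a finite category and a separating functor; so we may assume~$f, g \in \Mor_C(X,Y)$ for some~$X, Y \in \Ob(C)$. Since $C$ is a groupoid, $f$ is an isomorphism, so $X$ and $Y$ lie in the same connected component~$C_0$ of~$C$. This~$C_0$ is itself a connected groupoid and, being a full subcategory, satisfies~$\Aut_{C_0}(X) = \Aut_C(X)$, which is residually finite by hypothesis; hence, by part~(1), $C_0$ is residually finite, and there exist a finite category~$D$ and a functor~$F_0 \colon C_0 \to D$ with~$F_0(f) \neq F_0(g)$.

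It remains to extend $F_0$ to all of~$C$. I would fix the object~$W := F_0(X)$ of~$D$ and define~$F \colon C \to D$ by~$F := F_0$ on~$C_0$, by~$F(Z) := W$ for every object~$Z \notin \Ob(C_0)$, and by~$F(h) := \id_W$ for every morphism~$h$ of~$C$ whose domain does not lie in~$\Ob(C_0)$ (such an~$h$ then also has its target outside~$\Ob(C_0)$, since~$h$ is an isomorphism and components are full, so~$F$ maps both its domain and its target to~$W$). Since in a groupoid every identity morphism and every composable pair of morphisms stays within a single connected component, $F$ is a well-defined functor, and $F(f) = F_0(f) \neq F_0(g) = F(g)$ with~$D$ finite, as required. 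The argument is purely formal; the only points deserving care are the (legitimate, by smallness of groupoids) appeal to choice in part~(1) and the routine verification that the extension~$F$ in part~(2) respects composition and identities, which is exactly the component-stability just mentioned.
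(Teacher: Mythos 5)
Your proof is correct and takes essentially the same route as the paper: part~(1) is the observation that a connected groupoid is equivalent to the one-object groupoid~$C_{\Aut_C(X)}$ (the paper phrases this via the skeleton on~$X$ and Corollary~\ref{cor:skeleton}, you construct the equivalence explicitly from chosen isomorphisms~$\varphi_Z$) combined with Proposition~\ref{prop:groupcat} and Proposition~\ref{prop:equiv}. In part~(2) the paper passes to a skeleton with one object per component and collapses all but one component, while you separate inside the relevant component via part~(1) and then extend the functor by collapsing the other components to~$F_0(X)$ --- the same collapsing idea in slightly different bookkeeping, with your explicit appeal to Proposition~\ref{prop:sametype} for morphisms with different domains or targets making precise a point the paper leaves implicit.
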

\begin{proof}
  For the first part, 
  let $C'$ be the full subcategory of~$C$ generated by~$X$. Because $C$
  is a groupoid, $C'$ is a skeleton of~$C$. Moreover, $C'$ is isomorphic
  to the category~$C_{\Aut_C(X)}$. Applying Corollary~\ref{cor:skeleton} and
  Proposition~\ref{prop:groupcat} finishes the proof of the first part.

  For the second part, Corollary~\ref{cor:aut} proves the
  implication~(a)~$\Longrightarrow$~(b).  For the converse
  implication, we can argue as follows: Choosing (via the axiom of
  choice) one object in each connected component of~$C$ leads to a
  skeleton of~$C$. Assuming~(b), this skeleton is easily seen to be
  residually finite (Proposition~\ref{prop:groupcat} and collapsing
  all but one components to the one-object category~$C_{\{1\}}$). Hence, applying
  Proposition~\ref{cor:skeleton} and Proposition~\ref{prop:groupcat}
  shows that $C$ is residually finite as well.
\end{proof}

\subsection{Graphs, posets, and free categories}

\begin{prop}\label{prop:graph}
  Let $X$ be a directed graph. Then the associated category~$C_X$
  (Section~\ref{subsec:graph}) is residually finite.
\end{prop}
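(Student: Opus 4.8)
The plan is to exploit the fact that the category~$C_X$ is \emph{thin}: by the very definition in Section~\ref{subsec:graph}, for all~$u,v \in V = \Ob(C_X)$ the morphism set~$\Mor_{C_X}(u,v)$ is either empty or the one-element set~$\{\bullet\}$. In particular, $C_X$ never contains two distinct morphisms with the same domain \emph{and} the same target. (Equivalently, $C_X$ is the category associated with the reachability preorder on~$V$.)

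First I would make this observation precise and draw the obvious consequence: if $f$ and $g$ are morphisms in~$C_X$ with~$f \neq g$, then they cannot lie in one and the same morphism set~$\Mor_{C_X}(u,v)$, since such a set has at most one element. Hence, writing $f \in \Mor_{C_X}(X,Y)$ and $g \in \Mor_{C_X}(X',Y')$, we necessarily have~$X \neq X'$ or~$Y \neq Y'$; that is, the domains or the targets of~$f$ and~$g$ differ.

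At this point I would simply invoke Proposition~\ref{prop:sametype}, whose hypothesis is exactly that the domain objects or the target objects of the two morphisms differ, and which then supplies a finite category~$D$ and a functor~$F \colon C_X \longrightarrow D$ with~$F(f) \neq F(g)$. Since $f \neq g$ were arbitrary, this shows that $C_X$ is residually finite.

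I do not expect a genuine obstacle here: the entire content of the argument is the observation that $C_X$ is thin, after which everything is reduced to Proposition~\ref{prop:sametype}. The only point worth a word of care is that we are using the naive (``equality of objects'') semantics of Section~\ref{subsec:moreq}, under which distinct morphisms of a thin category are indeed forced to have distinct source or target; but this is precisely the setting in which Proposition~\ref{prop:sametype} is stated, so no additional work is needed.
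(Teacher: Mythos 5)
Your proof is correct, but it takes a different route than the paper. You observe that $C_X$ is thin, so two distinct morphisms are forced (in the naive semantics of Section~\ref{subsec:moreq}) to have different domain or target objects, and then you conclude directly by Proposition~\ref{prop:sametype}; this reduces everything to the explicit four-object complete-graph category, is completely constructive, avoids the axiom of choice, and in fact proves the stronger statement that \emph{every} thin category (every preorder category) is residually finite, which also subsumes the poset corollary that follows in the paper. The paper instead symmetrizes the graph to $\overline X$, notes that $C_X$ embeds in the groupoid $C_{\overline X}$ whose automorphism groups are all trivial, and invokes Corollary~\ref{cor:groupoid} together with Proposition~\ref{prop:subcat}; that argument leans on the groupoid machinery (and, via the skeleton argument in Corollary~\ref{cor:groupoid}, on the axiom of choice for disconnected graphs), but it illustrates how the result sits inside the general framework of groupoids and free categories that the paper develops. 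Both arguments are valid; yours is the more elementary and self-contained one, while the paper's fits the proposition into its broader toolkit.
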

\begin{proof}
  Let $\overline X$ be the directed graph
  \[ \overline X := \bigl( V, \{ (u,v) \mid (u,v) \in E \lor (v,u) \in E \}\bigr)
  \]
  obtained from~$X$ by adding for each edge also the inverse
  edge. Then the category~$C_X$ is a subcategory of~$C_{\overline X}$,
  which is a groupoid. Moreover, for each vertex~$v$ of~$\overline X$,
  the automorphism group~$\Aut_{C_{\overline X}}(v)$ is trivial,
  whence residually finite. Therefore, $C_{\overline X}$ is residually
  finite (Corollary~\ref{cor:groupoid}) and so also~$C_X$ is residually
  finite (Proposition~\ref{prop:subcat}).

  Of course, alternatively, we can also invoke the more general statement
  on free categories (Proposition~\ref{prop:freecat}).
\end{proof}

\begin{cor}
  Let $I$ be a poset. Then the poset category of~$I$ is residually finite. 
\end{cor}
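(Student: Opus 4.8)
The plan is to reduce this corollary to Proposition~\ref{prop:graph} by exhibiting the poset category of~$I$ as the category associated with a suitable directed graph. Recall that the poset category of~$I$ has object set~$I$, exactly one morphism from~$u$ to~$v$ when~$u \leq v$, and no morphism otherwise, with composition and identities forced by these morphism sets. Since a poset is in particular a set, the pair~$X := \bigl(I, \{(u,v) \mid u \leq v\}\bigr)$ is a directed graph in the sense of Section~\ref{subsec:graph}.

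First I would check that for~$u, v \in I$ there is a directed path from~$u$ to~$v$ in~$X$ if and only if~$u \leq v$. The ``if'' direction is immediate: $(u,v)$ is itself an edge of~$X$ (and if~$u = v$ one may use the empty path, matching reflexivity). For the ``only if'' direction, a directed path~$u = u_0 \to u_1 \to \cdots \to u_n = v$ corresponds to a chain~$u_0 \leq u_1 \leq \cdots \leq u_n$, so transitivity of~$\leq$ yields~$u \leq v$. Hence~$\Mor_{C_X}(u,v)$ has exactly one element when~$u \leq v$ and is empty otherwise, so the identity on objects extends (uniquely) to an isomorphism of categories between~$C_X$ and the poset category of~$I$, since composition of morphisms is uniquely determined on both sides and compatible.

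By Proposition~\ref{prop:graph}, the category~$C_X$ is residually finite, and therefore so is the poset category of~$I$ by Proposition~\ref{prop:iso} (alternatively, one could invoke Proposition~\ref{prop:freecat}). I do not expect a genuine obstacle here: the only point requiring a moment's care is that transitivity collapses all directed paths between two vertices to a single morphism and that reflexivity accounts for the identity morphisms, so that~$C_X$ is not merely equivalent but actually isomorphic to the poset category of~$I$.
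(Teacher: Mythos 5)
Your proposal is correct and follows exactly the paper's argument: realise the poset category as the category associated with the directed graph $\bigl(I,\{(u,v)\mid u\leq v\}\bigr)$ and apply Proposition~\ref{prop:graph}. The extra verification that reflexivity and transitivity make $C_X$ literally isomorphic to the poset category is a detail the paper leaves implicit, and you handle it correctly.
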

\begin{proof}
  If $I$ is a poset, then the poset category of~$I$ is the same as the
  category associated to the directed graph
  \[ \bigl( I ,  \{(x,y) \mid x,y \in I, x \leq y \} \bigr).
  \]
  Hence, by Proposition~\ref{prop:graph}, the poset category of~$I$ is
  residually finite.
\end{proof}

Free groups are residually finite~\cite[Theorem~2.3.1]{csc}; we will now establish
the corresponding result for categories. 

\begin{prop}\label{prop:freecat}
  Let $X$ be a quiver. Then the free category~$F_X$, freely generated
  by~$X$, is residually finite.
\end{prop}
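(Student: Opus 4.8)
The plan is to separate two distinct morphisms in the free category $F_X$ by a functor to a finite category. Recall that the objects of $F_X$ are the vertices $V$ of the quiver $X$, and a morphism from $u$ to $v$ is a directed path in $X$ from $u$ to $v$, i.e. a (possibly empty) finite word $e_n \cdots e_1$ of composable edges with source of $e_1$ equal to $u$ and target of $e_n$ equal to $v$; composition is concatenation of paths. So a pair of distinct morphisms is either a pair of paths with different endpoints — already handled by Proposition~\ref{prop:sametype} — or a pair of distinct paths $p = e_n \cdots e_1$ and $q = e'_m \cdots e'_1$ with the same source $u$ and same target $w$.

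First I would reduce to the second case using Proposition~\ref{prop:sametype}, so assume $p \neq q$ share source $u$ and target $w$. The idea is to build a finite category $D$ that ``remembers'' just enough of a path to distinguish $p$ from $q$. Let $\ell := \max\{\text{length}(p), \text{length}(q)\} + 1$ and let $E_0 \subseteq E$ be the finite set of edges actually occurring in $p$ or $q$. Take $D$ to have a single object $\star$, and let $\Mor_D(\star,\star)$ be the free monoid $M$ on $E_0$ modulo the relation that collapses all words of length $\geq \ell$ to a single absorbing element $z$ (so $M$ consists of all $E_0$-words of length $< \ell$, together with $z$, with multiplication defined to land in $z$ whenever the product would have length $\geq \ell$). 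This $M$ is a finite monoid, hence a finite category with one object. Define $F \colon F_X \to D$ by sending every object to $\star$, every generating edge $e \in E_0$ to itself in $M$, every generating edge $e \notin E_0$ to the identity of $M$ (or to $z$ — either works, as long as it is consistent), and extending to paths by the universal property of the free category. Then $F(p)$ is the word $e_n \cdots e_1$ viewed in $M$, which is not $z$ since its length is $< \ell$; likewise $F(q)$; and since $p$ and $q$ are distinct words over $E_0$ of length $< \ell$, we get $F(p) \neq F(q)$ in $M$. This gives $F(p) \neq F(q)$, as desired.

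The main point to get right is the bookkeeping that the truncated free monoid $M$ is genuinely a well-defined finite monoid (associativity of the truncated multiplication is the one routine check), and that the assignment on generators does extend to a functor out of the free category $F_X$ — this is exactly the universal property that defines ``free category generated by a quiver,'' so it is automatic once the target data $(\star, M)$ is a legitimate finite category. The only mild subtlety is the treatment of edges not in $E_0$: sending them to $\id_\star$ is harmless because they do not appear in $p$ or $q$, so they cannot affect the images $F(p), F(q)$.

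An alternative, perhaps cleaner, route avoids truncation entirely: observe that $F_X$ embeds faithfully into the free category on the quiver $\overline X$ obtained by adjoining a formal inverse for every edge, and this latter category is a groupoid whose automorphism groups are free groups, which are residually finite by \cite[Theorem~2.3.1]{csc}; then apply Corollary~\ref{cor:groupoid} and Proposition~\ref{prop:subcat}. I would likely present the direct truncation argument as the main proof since it is self-contained and does not invoke residual finiteness of free groups, but mention the groupoid argument as a remark. The hard part, such as it is, is purely organisational: making sure the reduction via Proposition~\ref{prop:sametype} is invoked correctly and that the finite monoid $M$ is described precisely enough to be unambiguously finite.
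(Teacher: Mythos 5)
Your main argument is correct, but it takes a genuinely different route from the paper's. The paper collapses all vertices of $X$ to a single vertex, so that the two distinct paths map to distinct elements of the free monoid on the edge set, then embeds this free monoid into the free group on the same generators and invokes residual finiteness of free groups together with Proposition~\ref{prop:groupcat}. You instead reduce to morphisms with common domain and target via Proposition~\ref{prop:sametype} and build the separating finite category by hand: a one-object category whose endomorphism monoid is the Rees quotient of the free monoid on the finitely many edges occurring in $p$ or $q$ by the ideal of words of length at least $\ell$. This is indeed a finite monoid (associativity of the truncated product is the routine Rees-quotient check), the universal property of $F_X$ gives the functor, and distinct paths with the same endpoints are distinct edge words of length less than $\ell$, so their images in the truncation differ; the treatment of edges outside $E_0$ is harmless, as you say. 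Your route is more elementary and self-contained --- it avoids the external input that free groups are residually finite and in effect proves directly that finitely generated free monoids are residually finite --- while the paper's route is shorter because it outsources the separation to the group-theoretic theorem and keeps the analogy with the group case visible. One caveat concerns only your closing remark: the free \emph{category} on the quiver $\overline X$ obtained by adjoining formal inverse edges is not a groupoid (there $e^{-1}\circ e \neq \id$); what you want is the free groupoid on $X$, and the faithfulness of the canonical functor from $F_X$ to the free groupoid, while true (positive paths are already reduced words), is a nontrivial assertion of exactly the same flavour as the free-monoid-into-free-group embedding the paper uses. Since this is presented only as an aside, it does not affect your main proof.
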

\begin{proof}
  Let $f$ and $g$ be morphisms in~$F_X$ with~$f \neq g$. We can view
  $f$ and $g$ as finite (directed) paths in~$X$; because of $f \neq g$,
  they differ in at least one edge.

  Let $Y$ be the quiver obtained from~$X$ by identifying all vertices
  to a single vertex (and keeping distinct edges distinct)
  and let $\pi \colon X \longrightarrow Y$ be the
  corresponding quiver projection. Because $F_X$ is the free category,
  freely generated by~$X$, there exists a functor~$P \colon F_X
  \longrightarrow F_Y$ that induces the quiver morphism~$\pi$ on the
  underlying quivers. Because $f$ and $g$ are different, also the
  associated paths in~$Y$ are different.

  Because $Y$ is a quiver with a single vertex, the category~$F_Y$ is
  the category associated with the free monoid, freely generated by
  the edges of~$Y$. Hence, we can view~$F_Y$ as subcategory of the
  category~$C$ associated with the free group~$G$, freely generated by
  the edges of~$Y$. Because $G$ is residually finite~\cite[Theorem~2.3.1]{csc}, also
  the category~$C$ is residually finite
  (Proposition~\ref{prop:groupcat}).  Hence, there exists a finite
  category~$D$ and a functor~$F \colon C \longrightarrow D$ with~$F(f)
  \neq F(g)$. Then the composition
  \[ F \circ P \colon F_X \longrightarrow D
  \]
  separates~$f$ and $g$.
\end{proof}

\subsection{Sets and simplices}

\begin{prop}\label{prop:fset}
  The category~$\FSet$ of finite sets is \emph{not} residually finite.
\end{prop}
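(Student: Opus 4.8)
The plan is to exhibit a single pair of distinct morphisms in $\FSet$ that no functor to a finite category can separate, in close analogy with the argument for $\FSym_\infty$ (where the 3-cycle $(1\,2\,3)$ and $e$ are identified by every homomorphism to a finite group). Concretely, I would take the object $\{1,2,3\}$ and set $g := \id_{\{1,2,3\}}$ and $f := (1\,2\,3) \in \Aut_{\FSet}(\{1,2,3\})$, two distinct elements of $\Mor_{\FSet}(\{1,2,3\},\{1,2,3\})$. The task is then to show that for every finite category~$D$ and every functor $F\colon \FSet \longrightarrow D$ one has $F(f) = F(g) = \id_{F(\{1,2,3\})}$.

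Suppose not: let $F\colon \FSet \longrightarrow D$ with $D$ finite satisfy $F(f) \neq \id_{F(\{1,2,3\})}$. For each $n \geq 3$ write $[n] := \{1,\dots,n\}$; let $\iota_n \colon \{1,2,3\} \hookrightarrow [n]$ be the inclusion, let $\rho_n \colon [n] \longrightarrow \{1,2,3\}$ be the retraction that is the identity on $\{1,2,3\}$ and sends $\{4,\dots,n\}$ to~$1$, and let $c_n \in \Aut_{\FSet}([n])$ be the 3-cycle $(1\,2\,3)$ (fixing $4,\dots,n$). Then $\rho_n \circ \iota_n = \id_{\{1,2,3\}}$ and $\rho_n \circ c_n \circ \iota_n = f$, since $c_n$ acts on $\{1,2,3\}$ as $(1\,2\,3)$ and $\rho_n$ is the identity there. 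Applying $F$: if $F(c_n) = \id_{F([n])}$ were to hold, functoriality would give $F(f) = F(\rho_n)\circ F(\iota_n) = F(\rho_n \circ \iota_n) = \id_{F(\{1,2,3\})}$, contradicting our assumption. Hence $F(c_n) \neq \id_{F([n])}$ for every $n \geq 3$.

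Now fix $n \geq 5$. The functor $F$ restricts to a group homomorphism $\Aut_{\FSet}([n]) \longrightarrow \Aut_D(F([n]))$, hence to a homomorphism $A_n \longrightarrow \Aut_D(F([n]))$ out of the alternating group; this homomorphism is nontrivial because it does not send $c_n \in A_n$ to the identity. Since $A_n$ is simple for $n \geq 5$, it is therefore injective, so $|\Aut_D(F([n]))| \geq |A_n| = n!/2$. But $D$ is finite, so $M := \max_{Z \in \Ob(D)} |\Mor_D(Z,Z)|$ is a finite number and $|\Aut_D(F([n]))| \leq M$ for all~$n$; choosing $n$ with $n!/2 > M$ gives a contradiction. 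Consequently every functor from $\FSet$ to a finite category identifies $f$ and $g$, so $\FSet$ is not residually finite.

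The main obstacle is that this non-residual-finiteness cannot be detected at a single object: each $\Aut_{\FSet}([n])$ is a finite (hence residually finite) symmetric group, so Corollary~\ref{cor:aut} yields nothing, and the negation of Definition~\ref{def:rfcat} forces one to pin down a genuinely \emph{fixed} pair of morphisms in advance. The key move that makes the plan work is the observation that a hypothetical separation of the one small pair $(f,g)$ at $\{1,2,3\}$ propagates, via the compatible maps $\iota_n$ and $\rho_n$, to nontriviality of $F$ on the 3-cycle $c_n$ for \emph{all} $n$ at once; after that, simplicity of $A_n$ does the rest by producing arbitrarily large finite groups inside the finite category~$D$. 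I expect the only points needing care to be the verification of the identities $\rho_n \circ \iota_n = \id_{\{1,2,3\}}$ and $\rho_n \circ c_n \circ \iota_n = f$, and the standard facts that $A_n$ is simple and of order $n!/2$ for $n \geq 5$.
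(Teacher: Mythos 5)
Your proposal is correct and is essentially the paper's own argument read contrapositively: the same pair $\id_{\{1,2,3\}}$ and $(1\,2\,3)$, the same factorisation through the inclusion and retraction of $\{1,2,3\}$ in $[n]$, and the same use of the simplicity of $A_n$ to force every functor to a finite category to kill the large alternating groups. The only cosmetic difference is that the paper picks one $N$ with $|F(A_N)|<|A_N|$ and concludes directly that $F$ is trivial on $A_N$, whereas you assume separation and derive arbitrarily large finite groups inside $D$; both are sound.
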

\begin{proof}
  We consider
  \begin{align*}
    f & := \id_{\{1,2,3\}} \in S_3 = \Aut_{\FSet}\bigl(\{1,2,3\}\bigr)
    \\
    g & := (1\ 2\ 3)_3 \in S_3 = \Aut_{\FSet}\bigl(\{1,2,3\}\bigr)
  \end{align*}
  (but in fact any two different maps would work). Let $D$ be a finite
  category and let $F \colon \FSet \longrightarrow D$ be a functor. We will
  now show that $F(f) = F(g)$, using a detour via bigger sets.

  If $N \in \N$, then the alternating group~$A_N$ is a subset
  of~$\Aut_{\FSet}(\{1,\dots,N\})$.  Because $D$ is finite, there
  exists an~$N\in \N_{\geq 5}$ with~$\bigl|F(A_N)\bigr| <
  |A_N|$. Because $F$ restricted to~$A_N$ is a group homomorphism and
  because $A_N$ is simple, it follows that
  \[ F(h) = \id_{F(\{1,\dots, N\})}
  \]
  for all~$h \in A_N$. 
  Let $i \colon \{1,2,3\} \longrightarrow \{1,\dots,N\}$ be the
  inclusion and let $\pi \colon \{1,\dots,N\} \longrightarrow \{1,2,3\}$
  be the projection that sends all~$j \in \{4,\dots,N\}$ to~$3$. Then
  \begin{align*}
    f & = \pi \circ i \\
    g & = \pi \circ (1\ 2\ 3)_N \circ i.
  \end{align*}
  Because $(1\ 2\ 3)_N$ defines an element of~$A_N$, we obtain
  \begin{align*}
    F(g) & = F(\pi \circ (1\ 2\ 3)_N \circ i)
    \\
    & = F(\pi) \circ F\bigl((1\ 2\ 3)_N\bigr) \circ F(i)
    \\
    & = F(\pi) \circ \id_{F(\{1,\dots,N\})} \circ F(i)
    \\
    & = F(\pi) \circ F(i)
    \\
    & = F(\pi \circ i)
    \\
    & = F(f).
  \end{align*}
  
  Therefore, $f$ and $g$ cannot be separated by a functor to a finite category.
  Hence, $\FSet$ is \emph{not} residually finite.

  Alternatively, one could also argue via the simplex category~$\Delta$
  as subcategory of~$\FSet$ (Proposition~\ref{prop:simplex}).
\end{proof}

It should be noted that each object in~$\FSet$ has a (residually)
finite automorphism group. Hence, in the case of~$\FSet$ the
non-residual finiteness originates in the overal interaction of
``small'' objects with ``big'' objects.

More drastically, the simplex category~$\Delta$ is not residually
finite (even though all objects have trivial automorphism group). We
will use the following version of~$\Delta$: Objects are all sets of
the form~$[n] := \{0,\dots, n\}$ with~$n \in \N$ and morphisms are all
monotonically increasing functions.

\begin{prop}\label{prop:simplex}
  The simplex category~$\Delta$ is \emph{not} residually finite.
\end{prop}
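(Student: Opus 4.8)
The plan is to mimic the strategy used for $\FSet$ in Proposition~\ref{prop:fset}, but the difficulty is that $\Delta$ has no nontrivial automorphisms, so there is no simple infinite group to exploit directly. Instead, the idea is to find a concrete pair of distinct morphisms in $\Delta$ between small objects that, after passing through any functor $F$ to a finite category $D$, become equal because a suitable large endomorphism has to act as an identity. Concretely, I would take the two morphisms $\{0,1\} \longrightarrow \{0,1\}$, namely the identity $\id_{[1]}$ and the constant map $c$ at~$0$ (or at~$1$); these are distinct in $\Delta$. The goal is to show that every functor $F \colon \Delta \longrightarrow D$ into a finite category satisfies $F(\id_{[1]}) = F(c)$.

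The key mechanism is an Eilenberg swindle / idempotent-splitting argument inside the monoid of endomorphisms of a large object. First I would observe that for each $n$, the endomorphism monoid $\Mor_\Delta([n],[n])$ is finite, so on its image under $F$ there are only finitely many elements; by a pigeonhole argument, for $N$ large enough relative to $|D|$ we can force a coincidence. More useful is the classical fact that $\Delta$ contains, for each $n$, the ``shift'' maps and ``concatenation'' structure: one has a coproduct-like operation $[m] \sqcup [n] \cong [m+n+1]$ at the level of the augmented simplex category, or one can work with the face maps $d^i$ and degeneracies $s^i$. I would use the relation that the constant map at $0$, call it $c_n \colon [n] \to [n]$, and the identity are ``conjugate up to inclusions'' via maps to and from larger simplices in a way analogous to the $\pi \circ i$, $\pi \circ \sigma \circ i$ factorisation in Proposition~\ref{prop:fset}. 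Precisely: writing $i \colon [1] \hookrightarrow [N]$ for an order-preserving inclusion and $\pi \colon [N] \to [1]$ for an order-preserving surjection, one wants an endomorphism $\sigma$ of $[N]$ that must be killed by $F$ (sent to an identity) and that interpolates between the two behaviours, so that $F(c) = F(\pi)\circ F(\sigma)\circ F(i) = F(\pi)\circ F(i) = F(\id_{[1]})$.

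The step that produces the forced identity is where I expect the real work. The candidate is an idempotent $e \in \Mor_\Delta([N],[N])$ (for instance a degeneracy-section composite $s \circ d$ collapsing the last few coordinates) together with its ``complement''; since $D$ is finite, the submonoid of $\Mor_D(F[N],F[N])$ generated by $F(e)$ is a finite monoid, so some power $F(e)^k$ is idempotent, and by choosing $N$ and the combinatorics so that $e^k$ already equals a map that sandwiches $c$ and $\id_{[1]}$ appropriately, one concludes $F(c)=F(\id_{[1]})$. An alternative, perhaps cleaner, route is the hint the paper itself offers in Proposition~\ref{prop:fset}: embed $\Delta$ into a category where an infinite simple group appears — but since $\Delta \hookrightarrow \FSet$ and $\FSet$ is already shown non-residually-finite, that inclusion alone does not transfer non-residual-finiteness (subcategories of non-residually-finite categories can be residually finite). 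So one genuinely needs an internal argument. I would therefore construct explicitly, for each finite $D$, an $N$ and order-preserving maps $i, \pi, \sigma$ with $\sigma$ forced to act trivially (via a simplicity-free pigeonhole on the finite monoid $\Mor_D(F[N],F[N])$, using that some high power of a chosen non-invertible endomorphism of $[N]$ lands on a ``constant-like'' map), and then run the same five-line computation as in Proposition~\ref{prop:fset}. The main obstacle is pinning down the right $\sigma$ and the right identity $f = \pi\circ i = \pi \circ \sigma \circ i$ in $\Delta$ that makes the forced collapse of $\sigma$ under $F$ imply $F(f) = F(g)$; once that combinatorial gadget is in hand, the rest is formal.
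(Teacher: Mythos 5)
Your choice of test morphisms ($\id_{[1]}$ and the constant map at~$0$) and the general shape (detour through a large object $[N]$, pigeonhole against the finiteness of $D$, then the same five-line computation as for $\FSet$) match the paper, but the proof has a genuine gap exactly where you say it does: the ``combinatorial gadget'' you leave open is the entire content of the argument. Moreover, the mechanism you propose for it --- finding a single endomorphism $\sigma$ of $[N]$ that is \emph{forced} to be sent to an identity, so that $g = \pi\circ\sigma\circ i$ collapses onto $f = \pi\circ i$ --- is the wrong mechanism: in $\Delta$ there is no simplicity or invertibility available, and nothing compels a functor to a finite category to send any particular non-identity endomorphism of a fixed $[N]$ to $\id_{F([N])}$. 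Pigeonhole in the finite monoid $\Mor_D(F([N]),F([N]))$ only yields that two \emph{distinct parallel} morphisms of $\Delta$ get identified by $F$; it never yields $F(\sigma)=\id$. The paper's proof is built around this weaker conclusion: it takes $N > |\Mor_D(X,X)|$ for all $X\in\Ob(D)$, considers the $N$ distinct endomorphisms $\pi_y\colon [N]\to[N]$ ($0\mapsto 0$, $j\mapsto y$ for $j\geq 1$), pigeonholes to get $y<z$ with $F(\pi_y)=F(\pi_z)$, and then chooses the retraction $r\colon[N]\to[1]$ cutting at $y$ together with the inclusion $i\colon[1]\to[N]$ so that $r\circ\pi_z\circ i=\id_{[1]}$ while $r\circ\pi_y\circ i=\mathrm{const}_0$; functoriality then gives $F(f)=F(g)$. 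Note that $r$ is chosen \emph{after} the pair $(y,z)$ is found --- this dependence is what makes the factorisation trick work, and it is precisely what your sketch does not supply.

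Your alternative hint of taking high powers of a non-invertible endomorphism can in fact be completed, but not in the ``forced identity'' form you state: take the shift $\sigma\colon[N]\to[N]$, $j\mapsto\max(j-1,0)$, pigeonhole to get $0\leq k<l\leq N$ with $F(\sigma^k)=F(\sigma^l)$, set $i(0)=0$, $i(1)=N$, and let $r$ send $j$ to $0$ for $j\leq N-l$ and to $1$ otherwise; then $r\circ\sigma^k\circ i=\id_{[1]}$ and $r\circ\sigma^l\circ i=\mathrm{const}_0$, so $F(f)=F(g)$. That is essentially the paper's argument in a different guise. The idempotent-splitting digression, by contrast, does not lead anywhere as stated. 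As written, the proposal identifies the right ingredients but stops short of the construction that carries the proof, and the one concrete mechanism it commits to (forcing $F(\sigma)$ to be an identity) would fail.
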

\begin{proof}
  We consider
  \begin{align*}
    f := \id_{[1]} \colon [1] & \longrightarrow [1] \\
    g := \text{const}_0 \colon [1] & \longrightarrow [1].
  \end{align*}
  Let $D$ be a finite category and let $F \colon \Delta
  \longrightarrow D$ be a functor. We will now show that $F(f) =
  f(g)$, using a detour via bigger sets.

  Because $D$ is finite, there exists an~$N \in \N_{\geq 2}$
  such that $N > |\Mor_D(X,X)|$ holds for all~$X \in \Ob(D)$.
  For each~$y \in \{1,\dots, N\}$, we set
  \begin{align*}
    \pi_y \colon [N] & \longrightarrow [N] \\
    j & \longmapsto
    \begin{cases}
      0 & \text{if $j=0$}\\
      y & \text{if $j\geq 1$},
    \end{cases}
  \end{align*}
  which is a morphism in~$\Delta$. By the choice of~$N$, there
  exist~$y ,z \in \{1,\dots,N\}$ with
  \[ y < z \quad\text{and}\quad F(\pi_y) = F(\pi_z).
  \]
  Moreover, we look at the following morphisms in~$\Delta$:
  \begin{align*}
    i \colon [1] & \longrightarrow [N]
    \\
    x & \longmapsto x
    \\
    r \colon [N] & \longrightarrow [1]
    \\
    j & \longmapsto
    \begin{cases}
      0 & \text{if $j\in \{0,\dots, y\}$}
      \\
      1 & \text{if $j > y$}.
    \end{cases}
    \end{align*}
    Then
    \begin{align*}
      f & = r \circ \pi_z \circ i \\
      g & = r \circ \pi_y \circ i,
    \end{align*}
    and we conclude
    \begin{align*}
      F(f) & = F( r\circ \pi_z \circ i)
      \\
      & = F(r) \circ F(\pi_z) \circ F(i)
      \\
      & = F(r) \circ F(\pi_y) \circ F(i)
      \\
      & = F(r \circ \pi_y \circ i)
      \\
      & = F(g).
    \end{align*}
    Therefore, $f$ and $g$ cannot be separated by a functor to a finite category,
    which shows that $\Delta$ is \emph{not} residually finite.
\end{proof}

\subsection{Module categories}

A key example of residually finite groups are finitely generated
linear groups~\cite{malcev,nica}. It is therefore natural to wonder
about the residual finiteness of module categories.  In the following,
a \emph{ring with unit} is a not necessarily commutative ring~$R$ that
has a multiplicative unit~$1$ with~$1 \neq 0$.

\begin{defi}[module categories]
  Let $R$ be a ring with unit and let $n \in \N$.
  Then we introduce the following categories:
  \begin{itemize}
  \item $\LModf R$: The category of all finitely generated free left $R$-modules
    and $R$-linear maps.
  \item $\LModf R|_n$: The category of all finitely generated free left $R$-modules
    freely generated by at most $n$~elements and $R$-linear maps.
  \end{itemize}
\end{defi}

\begin{prop}\label{prop:fmod}
  Let $R$ be a ring with unit. Then the category~$\LModf R$ is
  \emph{not} residually finite.
\end{prop}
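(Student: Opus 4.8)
The plan is to mimic the strategy used for $\FSet$ in Proposition~\ref{prop:fset}: produce two distinct $R$-linear maps that cannot be separated by any functor to a finite category, by factoring them through higher-dimensional free modules on which the functor is forced to collapse a large group. The natural candidates are the endomorphisms $f := \id_{R^3}$ and $g := $ the cyclic permutation of the three free generators of $R^3$, both viewed as automorphisms of the free module $R^3$; equivalently any two distinct $R$-linear endomorphisms of a fixed free module will do, but these have the advantage that $g$ sits inside a symmetric (or alternating) group of permutation matrices.

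First I would fix a functor $F \colon \LModf R \longrightarrow D$ to a finite category $D$ and, exactly as before, choose $N \geq 5$ with $\bigl|F(A_N)\bigr| < |A_N|$, where $A_N \leq \GL(N,R)$ is realised as the group of even permutation matrices acting on $R^N$ (this embeds $A_N$ into $\Aut_{\LModf R}(R^N)$ since permutation matrices over any unital ring are invertible). Since $F$ restricts to a group homomorphism $A_N \longrightarrow \Aut_D(F(R^N))$ with nontrivial kernel, and $A_N$ is simple, $F$ kills all of $A_N$; hence $F(h) = \id_{F(R^N)}$ for every even permutation matrix $h$. Then I would write down the inclusion $i \colon R^3 \hookrightarrow R^N$ onto the first three coordinates and the projection $\pi \colon R^N \twoheadrightarrow R^3$ sending the $j$-th generator to the third generator for all $j \geq 3$ (and fixing the first two); these are $R$-linear. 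As in the $\FSet$ argument, $f = \pi \circ i$ while $g = \pi \circ c_N \circ i$, where $c_N \in A_N$ is the $3$-cycle on the first three coordinates padded by the identity — here I should double-check the parity, but a $3$-cycle is an even permutation, so $c_N \in A_N$ and $F(c_N) = \id$. Applying $F$ and functoriality collapses the $F(c_N)$ factor, giving $F(g) = F(\pi)\circ F(i) = F(f)$.

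Therefore no functor to a finite category separates $f$ and $g$, and $\LModf R$ is not residually finite. The only genuinely delicate point — and the one I would treat carefully rather than wave past — is the verification that permutation matrices over an arbitrary (possibly noncommutative) unital ring $R$ genuinely give a group embedding $S_N \hookrightarrow \Aut_{\LModf R}(R^N)$ compatible with the factorisations above: this is elementary (the permutation action on the standard basis is $R$-linear and has an inverse permutation matrix), but it is where the hypothesis "$R$ has a unit with $1 \neq 0$" is used, to ensure $R^N$ is a well-defined nonzero object and the generators are honestly distinct. Alternatively, once Corollary~\ref{cor:fingenlin} and the non-example $\LModf R|_n$ are in place one could instead exhibit $\FSet$ or the relevant group as a subcategory, but the direct $A_N$-collapsing argument above is self-contained and parallels Proposition~\ref{prop:fset} most transparently.
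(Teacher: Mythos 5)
Your proposal is correct, but it takes a different route from the paper. The paper's proof is a two-line reduction: it observes that $\LModf R$ contains a subcategory isomorphic to $\FSet$ (objects $R^n$, morphisms given by the $\{0,1\}$-matrices with exactly one $1$ in each row, i.e.\ the ``function'' matrices), and then invokes Proposition~\ref{prop:fset} together with Proposition~\ref{prop:subcat}. You instead re-run the $A_N$-collapsing argument of Proposition~\ref{prop:fset} directly inside $\LModf R$, with permutation matrices playing the role of permutations, and the factorisations $f = \pi \circ i$, $g = \pi \circ c_N \circ i$ check out; the point you flag as delicate (that $\sigma \mapsto$ permutation matrix gives an embedding of $S_N$, resp.\ $A_N$, into $\Aut_{\LModf R}(R^N)$ over an arbitrary unital ring with $1 \neq 0$, so that distinct permutations give distinct morphisms) is exactly the verification the paper implicitly needs for its subcategory claim, so the two arguments have the same mathematical core. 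What each buys: the paper's version is shorter and reuses an already established non-example, while yours is self-contained and does not depend on Proposition~\ref{prop:fset}. One small point worth making explicit in your write-up: since the object $F(R^N)$ may vary with $N$, the existence of $N \geq 5$ with $\bigl|F(A_N)\bigr| < |A_N|$ should be justified by finiteness of $D$ as a whole (finitely many objects, each with a finite endomorphism set, so $|A_N|$ eventually exceeds every $|\Mor_D(X,X)|$), just as in the proof of Proposition~\ref{prop:fset}; and if one worries about left/right conventions making $\sigma \mapsto$ permutation matrix an anti-homomorphism, this is harmless because the image is still a finite simple group, so the kernel argument is unaffected.
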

\begin{proof}
  The category~$\LModf R$ contains a subcategory that is isomorphic to
  the category~$\FSet$ of finite sets (e.g., taking the objects~$R^n$
  with~$n \in \N$ and all $R$-linear maps given by right multiplication by
  matrices that only have entries in~$\{0,1\}$ and all of whose rows contain
  exactly one~$1$).
  Because $\FSet$ is \emph{not} residually finite
  (Proposition~\ref{prop:fset}), we obtain that $\LModf R$ is not
  residually finite (Proposition~\ref{prop:subcat}).
\end{proof}

However, we will see that adding appropriate finiteness conditions on
the category, will imply residual finiteness
(Corollary~\ref{cor:fingenlin}).

\begin{prop}
  Let $R$ be a ring with unit whose underlying additive Abelian group
  is \emph{not} residually finite. Then the category~$\LModf R|_{2}$ is
  \emph{not} residually finite.
\end{prop}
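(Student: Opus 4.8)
The plan is to show that non-residual finiteness of the additive group $(R,+)$ already obstructs residual finiteness of $\LModf R|_{2}$, by witnessing a pair of distinct scalars via a pair of morphisms between the free modules of rank at most $2$. Since $(R,+)$ is not residually finite, there exist $a \neq b$ in $R$ such that every homomorphism of Abelian groups $\psi \colon (R,+) \longrightarrow A$ into a finite Abelian group $A$ satisfies $\psi(a) = \psi(b)$; equivalently, setting $c := a - b \neq 0$, every such $\psi$ kills $c$, i.e.\ $c$ lies in every finite-index subgroup of $(R,+)$.

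First I would fix the two $R$-linear maps $f, g \colon R \longrightarrow R$ given by left multiplication by $a$ and by $b$ respectively (these are morphisms $R^1 \to R^1$ in $\LModf R|_2$; note $f \neq g$ because evaluating at $1 \in R$ gives $a \neq b$). Now let $D$ be any finite category and $F \colon \LModf R|_2 \longrightarrow D$ a functor; I must show $F(f) = F(g)$. The key step is to recognise the relevant arithmetic structure inside $\LModf R|_2$: the endomorphism monoid $\End_{\LModf R|_2}(R)$ is the multiplicative monoid of $R$, but more importantly the hom-set $\Mor_{\LModf R|_2}(R,R)$ carries an addition (pointwise sum of $R$-linear maps), and this addition is realised categorically. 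Concretely, there is an object $R^2$ in $\LModf R|_2$, the ``diagonal'' map $\delta \colon R \to R^2$, $x \mapsto (x,x)$, the ``codiagonal'' (fold) map $\nabla \colon R^2 \to R$, $(x,y) \mapsto x+y$, and for any two $R$-linear maps $u,v \colon R \to R$ the map $(u \times v) \colon R^2 \to R^2$; then $u + v = \nabla \circ (u \times v) \circ \delta$. Hence any functor $F$ on $\LModf R|_2$ induces, after composing with $F(\delta)$ on the right and $F(\nabla)$ on the left, an operation on a subset of $\Mor_D(F(R),F(R))$ that models this addition — and in particular the submonoid of $\Mor_D(F(R),F(R))$ generated by the images $F(m_r)$ of the scalar-multiplication maps $m_r \colon x \mapsto rx$ is the quotient of a commutative monoid under both $+$ and $\circ$.

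The crux of the argument is then: the map $r \mapsto F(m_r)$, $R \longrightarrow \Mor_D(F(R),F(R))$, is ``additive'' in the above sense, its image is finite (because $D$ is finite), and therefore it factors through a finite quotient of $(R,+)$. Making this precise requires a little care, because $\Mor_D(F(R),F(R))$ is only a finite \emph{set}, not a priori a group; but the operation $(s,t) \mapsto F(\nabla)\circ (s\text{-and-}t)\circ F(\delta)$ makes the image of $r \mapsto F(m_r)$ into a commutative monoid $M$, and the map $R \to M$ is a monoid homomorphism onto a finite commutative monoid. A finite commutative monoid need not be a group, so I cannot directly invoke non-residual finiteness of $(R,+)$; instead I would pass to an idempotent power. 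Since $M$ is finite, there is $N \in \N_{\geq 1}$ with $x^{N}$ idempotent for all $x \in M$ (take $N = |M|!$, say), and the submonoid $e M$ with $e := (\text{image of }1)^{N}$ is a finite group — here it is cleaner to observe that the relevant structure is really the image of the \emph{ring} $R$, so $M$ carries a second (multiplicative) operation making the image a finite quotient \emph{ring} of $R$, whose additive group is a finite quotient of $(R,+)$; the quotient map $R \to (\text{additive group of this ring})$ is then a group homomorphism to a finite Abelian group, so by choice of $a,b$ it identifies $a$ and $b$, i.e.\ $F(m_a) = F(m_b)$, that is $F(f) = F(g)$. I would then conclude exactly as in the proofs of Proposition~\ref{prop:fset} and Proposition~\ref{prop:simplex}: since $D$ and $F$ were arbitrary, $f$ and $g$ cannot be separated, so $\LModf R|_2$ is not residually finite.

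The main obstacle I expect is the bookkeeping around the fact that $\Mor_D(X,X)$ is merely a finite monoid under composition and the induced ``addition'' need not land in a group: one has to genuinely use that the image of $R$ inherits \emph{both} operations and is a finite quotient ring (or at least that the additive part descends to a finite Abelian group), rather than pretending $(R,+)$ maps to a finite group directly. The other point requiring attention is checking that $\delta$, $\nabla$, the product map $u \times v$, and all the $m_r$ really do live in $\LModf R|_2$ (they do: $R$ and $R^2$ are free of rank $\leq 2$, and $u\times v$ has rank $2$), so that the rank restriction ``$|_2$'' is not violated anywhere in the construction.
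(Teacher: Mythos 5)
There is a genuine gap at the crux of your argument: the claim that a functor $F \colon \LModf R|_2 \longrightarrow D$ induces an ``addition'' on (the image of) $\Mor_D(F(R),F(R))$. The identity $u+v = \nabla \circ (u \times v) \circ \delta$ is an identity \emph{inside} $\LModf R|_2$, and applying $F$ only yields $F(u+v) = F(\nabla)\circ F(u\times v)\circ F(\delta)$. Since an arbitrary functor does not preserve the (bi)product $R^2 = R \times R$, the morphism $F(u \times v)$ is \emph{not} determined by $F(u)$ and $F(v)$; there is no well-defined binary operation $(s,t)\mapsto F(\nabla)\circ(\text{$s$-and-$t$})\circ F(\delta)$ on $\Mor_D(F(R),F(R))$, because ``$s$-and-$t$'' has no meaning in $D$. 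Consequently the map $r \longmapsto F(m_r)$ need not be additive with respect to any structure on the finite target, and the asserted factorisation of $(R,+)$ through a finite quotient ring (or finite Abelian group) is unjustified; this is precisely the step your ``main obstacle'' paragraph glosses over (the issue is not monoid-versus-group, but that the addition is not functorial at all). It is also not clear that your chosen witnesses $m_a, m_b \in \Mor(R,R)$ are inseparable even when the statement is true, so the strategy, not just the bookkeeping, is in doubt. (A minor further point: for noncommutative $R$, left multiplication by $a$ is not a morphism of left $R$-modules; you would need right multiplication.)

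The way to rescue the underlying idea --- and this is the paper's route --- is to encode the additive group of $R$ by \emph{composition} rather than by the biproduct structure, since composition is what every functor preserves. Concretely, $x \longmapsto \begin{pmatrix} 1 & x \\ 0 & 1 \end{pmatrix}$ is an injective group homomorphism $(R,+) \longrightarrow \GL_2(R) = \Aut_{\LModf R|_2}(R^2)$, where addition in $R$ becomes composition of automorphisms of $R^2$. Any functor to a finite category restricts on $\Aut_{\LModf R|_2}(R^2)$ to a group homomorphism into the finite group $\Aut_D(F(R^2))$ (this is Proposition~\ref{prop:groupcat}/Corollary~\ref{cor:aut}); since subgroups of residually finite groups are residually finite and $(R,+)$ is not residually finite, $\GL_2(R)$ is not residually finite, and hence two suitable elementary matrices cannot be separated, so $\LModf R|_2$ is not residually finite.
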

\begin{proof}
  In view of Corollary~\ref{cor:aut}, we only need to show that the
  automorphism group~$\Aut_{\LModf R|_{2}}(R^2)$ is \emph{not} residually
  finite. Because $\Aut_{\LModf R|_{2}}(R^2)$ is isomorphic to the
  group~$\GL_2(R)$, it suffices to show that $\GL_2(R)$ is not
  residually finite. As
  \begin{align*}
    R & \longrightarrow \GL_2(R) \\
    x & \longmapsto
    \begin{pmatrix}
      1 & x \\
      0 & 1
    \end{pmatrix}
  \end{align*}
  is an injective group homomorphism from the additive group~$R$
  to~$\GL_2(R)$ and because the additive Abelian group~$R$ is not
  residually finite, also $\GL_2(R)$ is not residually finite. 
\end{proof}

\begin{exa}
  The category~$\LModf \Q|_{2}$ is \emph{not} residually finite
  because the additive group~$\Q$ is not residually finite~\cite[Example~2.1.9]{csc}.
\end{exa}

\begin{prop}\label{prop:modN}
  Let $R$ be a residually finite ring with unit and let $N \in \N$. Then
  the category~$\LModf R|_{N}$ is residually finite.
\end{prop}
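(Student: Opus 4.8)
The plan is to pass to a skeleton and then detect the difference of two morphisms by reducing matrices modulo a suitable finite quotient of $R$. By Corollary~\ref{cor:skeleton} it suffices to show that some skeleton $S$ of $\LModf R|_N$ is residually finite. Choosing the standard free modules as representatives, the category $S$ has at most $N+1$ objects, namely among $R^0, R^1, \dots, R^N$; a morphism $R^i \to R^j$ of $S$ is an $R$-linear map between finitely generated free left $R$-modules, which I would record, with respect to the standard bases, as a matrix over $R$ of size at most $N \times N$, and composition of morphisms then corresponds to multiplication of such matrices.

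First I would dispose of the easy case: if $f$ and $g$ are morphisms of $S$ with $f \neq g$ that have different domain or different target, then Proposition~\ref{prop:sametype}, applied to the category $S$, already provides a finite category and a functor separating $f$ and $g$. So I may assume $f, g \colon R^i \to R^j$ for some $i, j \le N$ and write $A$, $B$ for their matrices with respect to the standard bases. Since $f \neq g$ we have $A \neq B$, so the two matrices differ in some entry, say $A_{pq} \neq B_{pq}$.

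Now comes the use of residual finiteness of $R$: there is a finite ring $\bar R$ and a (unital) ring homomorphism $\rho \colon R \to \bar R$ with $\rho(A_{pq}) \neq \rho(B_{pq})$. Let $D$ be the full subcategory of $\LModf{\bar R}|_N$ on the objects $\bar R^0, \bar R^1, \dots, \bar R^N$. Since $\bar R$ is finite, $D$ is a finite category: it has $N+1$ objects, and every morphism set of $D$ consists of matrices over $\bar R$ of size at most $N \times N$ and so is finite. I would then define a functor $F \colon S \to D$ by $F(R^i) := \bar R^i$ on objects and, on morphisms, by applying $\rho$ to the representing matrices entrywise. Since $\rho$ is a unital ring homomorphism, entrywise reduction sends identity matrices to identity matrices and is compatible with matrix multiplication, so $F$ is a functor. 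By the choice of $\rho$, the reduced matrices of $f$ and $g$ still differ in the $(p,q)$-entry, so $F(f) \neq F(g)$; thus $F$ separates $f$ and $g$, and $\LModf R|_N$ is residually finite.

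I expect the functoriality of $F$ and the identification of the morphism sets of $S$ with sets of matrices over $R$ (compatibly with composition --- convention-sensitive but standard) to be routine. The only point that needs a little care --- which I would flag as a pitfall rather than a deep obstacle --- is that the construction must land in a genuinely finite category: $\LModf{\bar R}|_N$ has in general infinitely many objects, so I restrict the target to the finite full subcategory $D$ (equivalently, one could observe that $\LModf{\bar R}|_N$ is weakly finite and invoke the reduction of weakly finite categories to finite ones). Beyond this, the argument is just the observation that base change along a finite ring quotient of $R$ is functorial and that some such quotient separates any prescribed pair of distinct matrix entries.
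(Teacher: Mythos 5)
Your proof is correct and follows essentially the same route as the paper: pass to the skeleton on $R^0,\dots,R^N$, use Proposition~\ref{prop:sametype} to reduce to morphisms with equal domain and target, separate a differing matrix entry via a finite quotient of $R$, and separate $f$ and $g$ by the induced reduction (base change) functor. Your explicit restriction of the target to the finite full subcategory on $\bar R^0,\dots,\bar R^N$ is a small but welcome tightening of the paper's statement that the reduced module category is finite.
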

\begin{proof}
  The full subcategory of~$\LModf R|_N$ generated by~$0, R, \dots, R^N$
  is a skeleton of~$\LModf R|_{N}$. In view of Corollary~\ref{cor:skeleton},
  we can therefore restrict attention to morphisms between these modules. 
  
  Let $f$ and $g$ be morphisms in~$\LMod R |_{N}$ with~$f \neq g$. In view
  of Proposition~\ref{prop:sametype}, we may assume that $f$ and $g$
  have the same domain~$R^n$ and the same target~$R^m$.

  Let $A$ and $B \in M_{n \times m}(R)$ be the matrices
  representing~$f$ and $g$, respectively, with respect to the standard
  bases (via right multiplication). Because $f \neq g$, there exist~$j
  \in \{1,\dots,n\}$, $k \in \{1,\dots,m\}$ with~$A_{j,k} \neq B_{j,k}$.
  Because the ring~$R$ is residually finite, there exists a finite
  ring~$S$ and a ring epimorphism~$\pi \colon R \longrightarrow S$ such
  that~$\pi(A_{j,k}) \neq \pi(B_{j,k})$. In particular,
  \[ \id_S \otimes_R f \neq \id_S \otimes_R g
  \]
  in~$\LModf S|_{N}$ (where the tensor product is taken with respect to~$\pi$).

  Because the ring~$S$ is finite, also the category~$D := \LMod
  S|_{N}$ is finite and the reduction functor~$S \otimes_R \args
  \colon \LModf R|_{N} \longrightarrow D$ separates~$f$ and~$g$.
\end{proof}

\begin{cor}\label{cor:fingenlinmod}
  Let $R$ be a residually finite ring with unit and let $C$ be a finitely
  generated subcategory of~$\LModf R$. Then $C$ is residually finite. 
\end{cor}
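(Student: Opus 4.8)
The plan is to use finite generation to bound the ranks of the objects occurring in~$C$, thereby exhibiting $C$ as a subcategory of~$\LModf R|_N$ for a suitable~$N$, and then to quote Proposition~\ref{prop:modN} and Proposition~\ref{prop:subcat}.

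In detail: by hypothesis there is a finite set~$G$ of morphisms of~$\LModf R$ that generates~$C$. The object class of~$C$ then consists precisely of the (finitely many) domains and targets of the morphisms in~$G$, since composing morphisms never produces a new object. Each of these objects is a finitely generated free left $R$-module, so there is an~$N \in \N$ such that every object of~$C$ is free of rank at most~$N$. Consequently $C$ is a subcategory of~$\LModf R|_N$ (the full subcategory of~$\LModf R$ spanned by the free modules of rank at most~$N$): its objects lie in~$\LModf R|_N$ and its morphisms are $R$-linear maps, hence morphisms of~$\LModf R|_N$.

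Since $R$ is residually finite, $\LModf R|_N$ is residually finite by Proposition~\ref{prop:modN}, and therefore so is its subcategory~$C$ by Proposition~\ref{prop:subcat}. There is no genuine obstacle in this argument; the only point worth making carefully is the elementary observation that a finitely generated subcategory of~$\LModf R$ has only finitely many objects, and hence a uniform bound on their ranks — this is exactly what reduces the claim to the already-established Proposition~\ref{prop:modN}. (If one prefers, one may additionally pass to the skeleton of~$\LModf R|_N$ on~$0, R, \dots, R^N$ via Corollary~\ref{cor:skeleton} before applying Proposition~\ref{prop:modN}, but this refinement is not needed here.)
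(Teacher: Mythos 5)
Your argument is correct and is essentially the paper's own proof: finite generation forces $C$ to have only finitely many objects, hence $C$ sits inside $\LModf R|_N$ for some $N$, and the claim follows from Proposition~\ref{prop:modN} together with Proposition~\ref{prop:subcat}. The extra remarks (why the object class is finite, the optional passage to a skeleton) are harmless elaborations of the same route.
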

\begin{proof}
  Because $C$ is finitely generated, it contains only finitely many
  objects of~$\LModf R$. Hence, there exists an~$N \in \N$ such that $C$
  is a subcategory of~$\LModf R|_N$. We can now apply the previous Proposition~\ref{prop:modN}
  and Proposition~\ref{prop:subcat}.
\end{proof}

\begin{exa}
  For each~$N\in \N$, the category~$\LMod \Z|_{N}$ is residually
  finite: The family of all reductions~$\Z \longrightarrow \F_p$ modulo 
  prime numbers~$p$ shows that $\Z$ is a residually finite ring. We
  can therefore apply Proposition~\ref{prop:modN}.
\end{exa}

\begin{cor}\label{cor:fingenlincomm}
  Let $R$ be a finitely generated commutative ring with unit and let $C$
  be a finitely generated subcategory of~$\LModf R$. Then $C$ is residually
  finite.
\end{cor}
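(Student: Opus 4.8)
The plan is to deduce this from Corollary~\ref{cor:fingenlinmod}: it suffices to show that a finitely generated commutative ring~$R$ with unit is residually finite \emph{as a ring}, for then Corollary~\ref{cor:fingenlinmod} gives the claim directly. So fix~$a \in R$ with~$a \neq 0$; we must find a finite ring~$S$ and a ring homomorphism~$R \longrightarrow S$ that does not send~$a$ to~$0$. Since $R$ is a finitely generated $\Z$-algebra it is Noetherian, so its nilradical~$N$ satisfies~$N^s = 0$ for some~$s \in \N$, and $R$ has only finitely many minimal primes.

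If $a \notin N$, choose a minimal prime~$\mathfrak p$ of~$R$ with~$a \notin \mathfrak p$. Then $R/\mathfrak p$ is a finitely generated commutative domain in which the class~$\bar a$ of~$a$ is nonzero, so the localisation~$R' := (R/\mathfrak p)[\bar a^{-1}]$ is a nonzero finitely generated $\Z$-algebra and thus has a maximal ideal~$\mathfrak m$. The residue field~$R'/\mathfrak m$ is finitely generated as a ring, hence finite (a field that is finitely generated as a ring is finite; this is the Nullstellensatz over~$\Z$, a standard consequence of Zariski's lemma). The composite~$R \longrightarrow R/\mathfrak p \longrightarrow R' \longrightarrow R'/\mathfrak m$ sends~$a$ to a unit, in particular not to~$0$.

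The remaining case~$a \in N$ is the heart of the matter, since $a$ now dies in every reduced quotient of~$R$ and the Nullstellensatz cannot be used directly. Here I would work along the nilradical filtration~$R \supseteq N \supseteq N^2 \supseteq \dots \supseteq N^s = 0$: with $t$ maximal such that~$a \in N^t$, the image of~$a$ in the finitely generated $R/N$-module~$M := N^t/N^{t+1}$ is nonzero, where $R/N$ is again a finitely generated commutative ring. One then has to produce a finite module quotient of~$M$ in which this image survives, together with a finite quotient ring of~$R/N$ acting compatibly on it, and assemble these into an honest finite quotient ring of~$R$ still separating~$a$ from~$0$. This can be organised by an induction on~$s$ together with an induction on the Krull dimension of~$R$ (the zero-dimensional case being that a finitely generated commutative Artinian ring is finite), or via a square-zero extension argument; proving the ring statement and the analogous residual finiteness statement for finitely generated modules simultaneously by induction is the step that requires the most care and where the argument genuinely departs from the group-theoretic template. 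Alternatively, one may simply quote the classical fact that every finitely generated commutative ring with unit is residually finite and combine it with Corollary~\ref{cor:fingenlinmod}.
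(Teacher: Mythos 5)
Your argument is essentially the paper's: the proof there consists precisely of quoting the classical fact that every finitely generated commutative ring with unit is residually finite (Orzech--Ribes~\cite{orzechribes}) and then applying Corollary~\ref{cor:fingenlinmod}, which is exactly the fallback you offer in your final sentence. Your attempted self-contained proof of that ring fact is correct in the case~$a \notin N$ (via the Nullstellensatz over~$\Z$) but remains only a plan in the nilpotent case, so as a stand-alone argument it is incomplete; with the citation, however, your proof is complete and coincides with the paper's.
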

\begin{proof}
  Every finitely generated commutative ring is residually
  finite~\cite{orzechribes} and thus we can apply
  Corollary~\ref{cor:fingenlinmod}.
\end{proof}

Finally, we obtain the category-version of Malcev's theorem on linear
groups:

\begin{cor}\label{cor:fingenlin}
  Let $k$ be a field and let $C$ be a finitely generated subcategory
  of~$\LModf k$. Then $C$ is residually finite.
\end{cor}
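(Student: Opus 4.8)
The plan is to mimic Malcev's argument for linear groups: a finitely generated subcategory only ``sees'' finitely many scalars of~$k$, so it already lives over a finitely generated subring, to which Corollary~\ref{cor:fingenlincomm} applies. Concretely, since $C$ is finitely generated, it has only finitely many objects and is generated, together with the identity morphisms, by finitely many morphisms $\varphi_1,\dots,\varphi_s$. Each $\varphi_j$ is a $k$-linear map between finitely generated free $k$-modules, hence (after choosing bases) given by right multiplication with a matrix~$A_j$ over~$k$, and these matrices involve only finitely many elements of~$k$. Let $R\subseteq k$ be the subring generated by all entries of $A_1,\dots,A_s$; then $R$ is a finitely generated commutative ring with unit (with $1\neq 0$, as $1\neq 0$ in~$k$).

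Next I would set up the descent to~$R$. Using Corollary~\ref{cor:skeleton}, we may replace $C$ by a skeleton and assume its objects are among $k^{n_1},\dots,k^{n_r}$; by Proposition~\ref{prop:sametype} it suffices anyway to deal with morphisms of a fixed domain and target. Let $C_R$ be the subcategory of~$\LModf R$ generated by the objects $R^{n_1},\dots,R^{n_r}$ and by the morphisms given by right multiplication with $A_1,\dots,A_s$; this is a finitely generated subcategory of~$\LModf R$. Extension of scalars $k\otimes_R\args\colon \LModf R\longrightarrow\LModf k$ sends $R^n$ to~$k^n$ and right multiplication by a matrix over~$R$ to right multiplication by the same matrix over~$k$; I claim its restriction $F\colon C_R\longrightarrow C$ is an isomorphism of categories. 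Indeed, $F$ is bijective on objects, it is faithful because $R\hookrightarrow k$ is injective (so two matrices over~$R$ that agree over~$k$ already agree over~$R$), and its image is exactly~$C$ because $C$ is generated by the images of the generators of~$C_R$ — here one uses that a composite of morphisms of~$C$ corresponds to a product of the matrices $A_j$ and identity matrices, which again has all entries in~$R$. Given this, Corollary~\ref{cor:fingenlincomm} shows that $C_R$ is residually finite, and Proposition~\ref{prop:iso} transfers this to~$C$.

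There is no deep obstacle here; the only point requiring care is the verification that $F\colon C_R\longrightarrow C$ is an isomorphism, i.e.\ that the morphism sets of~$C$ really are closed under composition ``over~$R$'' (so that $C$ is the image of~$C_R$, not something larger) and that $F$ is faithful. The bookkeeping around objects — a priori the objects of~$C$ are unspecified finitely generated free $k$-modules rather than the standard ones~$k^{n_i}$ — is handled by passing to a skeleton via Corollary~\ref{cor:skeleton} (or by fixing bases once and for all at the start). Everything else is routine, and the substance of the statement is entirely contained in the reduction to a finitely generated subring of~$k$.
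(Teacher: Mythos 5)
Your proposal is correct and follows essentially the same route as the paper: the finitely many generating morphisms of~$C$ only involve finitely many matrix entries, so $C$ can be viewed (via choosing bases / extension of scalars from the subring they generate) as a finitely generated subcategory of~$\LModf R$ for a finitely generated commutative ring~$R\subseteq k$, and Corollary~\ref{cor:fingenlincomm} applies. The paper states this reduction in two sentences; your version merely spells out the bookkeeping (skeleton, the isomorphism $C_R\cong C$, faithfulness from $R\hookrightarrow k$) that the paper leaves implicit.
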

\begin{proof}
  Because $C$ is finitely generated, there exists a finitely generated
  commutative ring~$R$ such that all morphisms in~$C$ can be
  represented by matrices over~$R$. Therefore, we can view $C$ as a
  finitely generated subcategory of~$\LModf R$. Therefore, applying
  Corollary~\ref{cor:fingenlincomm} proves the claim.
\end{proof}

\section{Using residual finiteness}\label{sec:hopfian}

We will now show how residual finiteness of categories can exploited
in the presence of finite generation/finite presentation.

\subsection{Hopficity}

Every finitely generated residually finite category is Hopfian in
the following sense: 

\begin{thm}\label{thm:hopfian}
  Let $C$ be a finitely generated residually finite category and let
  $E \colon C \longrightarrow C$ be a full functor that is essentially
  surjective (i.e., for each~$X \in \Ob(C)$ there exists a~$Y \in
  \Ob(C)$ with~$E(Y) \cong_C X$). Then $E$ is faithful.
  In particular, in the presence of the axiom of choice, $E$ is
  an equivalence.
\end{thm}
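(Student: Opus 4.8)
The plan is to transplant the classical proof that a surjective endomorphism of a finitely generated residually finite group is injective; the extra bookkeeping concerns the action of $E$ on objects.

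\emph{Step 1: reduce to an endofunctor that is the identity on objects.} Since $C$ is finitely generated, $\Ob(C)$ is finite, so $E$ induces a self-map $\bar E$ of the finite set of isomorphism classes of objects of $C$; it is surjective because $E$ is essentially surjective, hence a permutation. Passing to a power $E^d$ with $\bar E^d = \id$ is harmless — $E^d$ is again full and essentially surjective, and faithfulness of $E^d$ implies faithfulness of $E$ (apply $E^{d-1}$ to $E(f)=E(g)$) — so we may assume $E(X)\cong_C X$ for every $X\in\Ob(C)$. Choosing isomorphisms $\eta_X\colon X\to E(X)$ and setting $E'(X):=X$ and $E'(\varphi):=\eta_Y^{-1}\circ E(\varphi)\circ\eta_X$ for $\varphi\colon X\to Y$ yields a functor $E'$ naturally isomorphic to $E$; as fullness, essential surjectivity and faithfulness are invariant under natural isomorphism, it suffices to show that $E'$ is faithful. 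So from now on I assume $E$ is the identity on objects, in which case fullness says precisely that $E$ is surjective on each morphism set $\Mor_C(X,Y)$.

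\emph{Step 2: two finiteness observations.} First, because $C$ is finitely generated and $D$ is finite, there are only finitely many functors $C\to D$: such a functor is determined by a map $\Ob(C)\to\Ob(D)$ together with its values on a finite generating set of morphisms, and for each of these finitely many data there are only finitely many possibilities in $D$. Second, an endofunctor of $C$ that is the identity on objects and surjective on morphism sets is right-cancellable: if $P\circ E=P'\circ E$, then $P$ and $P'$ agree on objects for free and on morphisms because every morphism is $E$ of one; hence each power $E^n$ is right-cancellable as well.

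\emph{Step 3: the pigeonhole argument, and the obstacle.} Suppose, for a contradiction, that $E$ is not faithful, and pick $f\neq g$ in some $\Mor_C(X,Y)$ with $E(f)=E(g)$. By residual finiteness there is a finite category $D$ and a functor $P\colon C\to D$ with $P(f)\neq P(g)$. The functors $P\circ E^n$ for $n\in\N$ all lie in the finite set of functors $C\to D$, so $P\circ E^n=P\circ E^m$ for some $0\le n<m$; writing $E^m=E^{m-n}\circ E^n$ and cancelling $E^n$ on the right gives $P=P\circ E^k$ with $k:=m-n\ge 1$. But $E(f)=E(g)$ forces $E^k(f)=E^k(g)$, whence $P(f)=P(E^k(f))=P(E^k(g))=P(g)$, contradicting the choice of $P$. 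Therefore $E$ is faithful; being moreover full and essentially surjective, it is an equivalence once one invokes the axiom of choice to construct a quasi-inverse. The only genuine departure from the group-theoretic argument is Step 1: without first replacing $E$ by a naturally isomorphic endofunctor that is the identity on objects, right-cancellability of $E$ can fail (for instance if $E$ collapses two objects), and it is exactly here that finiteness of $\Ob(C)$ enters, guaranteeing that $E$ permutes the isomorphism classes of objects.
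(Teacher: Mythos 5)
Your argument is correct, but it reaches faithfulness by a genuinely different route than the paper, even though both share the same engine: residual finiteness supplies a functor $P \colon C \to D$ to a finite category with $P(f) \neq P(g)$, finite generation gives that there are only finitely many functors $C \to D$, and the pigeonhole principle yields $P \circ E^n = P \circ E^m$ with $n < m$. The difference lies in how fullness and essential surjectivity are then used. The paper keeps $E$ untouched and inductively lifts the pair $f,g$ backwards through $E$: it constructs morphisms $f_k, g_k$ and isomorphisms $\varphi_k, \psi_k$ with $E(f_{k+1}) = \psi_k^{-1} \circ f_k \circ \varphi_k$ (and likewise for $g$), and shows that $E^n(f_n)$, $E^n(g_n)$ differ from $f,g$ only by composition with isomorphisms, are still separated by $P$, and hence have distinct images under $E$; this gives $E(f) \neq E(g)$ directly. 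You instead normalize $E$ first: finiteness of $\Ob(C)$ (which the paper also extracts from finite generation) forces $E$ to permute the isomorphism classes of objects, so a power $E^d$ is objectwise isomorphic to the identity, and conjugation by chosen isomorphisms replaces $E^d$ by a naturally isomorphic endofunctor that is the identity on objects; fullness then literally means surjectivity on each set $\Mor_C(X,Y)$, which gives right-cancellability, and the classical cancellation $P = P \circ E^k$ finishes the proof exactly as for groups. Your reductions are all justified correctly (powers of full, essentially surjective functors are again such; faithfulness, fullness and essential surjectivity are stable under natural isomorphism and under passing between $E$ and $E^d$), so there is no gap. What each approach buys: your normalization confines all categorical bookkeeping to a preliminary step needing only finitely many choices of isomorphisms, after which the Malcev-style argument runs verbatim; the paper's lifting argument never modifies $E$ and needs no analysis of the induced permutation of isomorphism classes, at the price of the more delicate inductive construction of lifts and correcting isomorphisms.
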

\begin{proof}
  We proceed as in the case of the corresponding result for groups:
  Let $f$ and $g$ be morphisms in~$C$ with~$f \neq g$; in view of
  Proposition~\ref{prop:sametype}, we assume that there are~$X, Y \in
  \Ob(C)$ with~$f,g \in \Mor_C(X,Y)$. Because $C$ is residually
  finite, there exists a finite category~$D$ and a functor~$F \colon C
  \longrightarrow D$ with~$F(f) \neq F(g)$.
  
  As $C$ is finitely generated and $D$ is a finite category,
  there exist only finitely many different functors~$C \longrightarrow D$.
  Hence, there are~$n,m \in \N$ with~$n< m$ and
  \[ F \circ E^n = F \circ E^m.
  \]
  
  Inductively, we find sequences~$(X_k)_{k \in \N}$,
  $(Y_k)_{k \in \N}$ in~$\Ob(C)$, sequences~$(f_k \in
  \Mor_C(X_k, Y_k))_{k \in \N}$, $(g_k \in \Mor_C(X_k,Y_k))_{k \in
    \N}$ of morphisms in~$C$, and sequences~$(\varphi_k \in \Mor_C(E(X_{k+1}),X_k))_{k \in
    \N}$, $(\psi_k \in \Mor_C(E(Y_{k+1}),Y_k))_{k \in \N}$ of isomorphisms in~$C$
  satisfying
  \begin{align*}
    f_0  = f, \quad
    g_0  = g, \quad
    X_0  = X, \quad
    Y_0  = Y
  \end{align*}
  and
  \begin{align*}
    E(f_{k+1}) & = \psi_k^{-1} \circ f_k \circ \varphi_k
    \\
    E(g_{k+1}) & = \psi_k^{-1} \circ g_k \circ \varphi_k
  \end{align*}
  for all~$k \in \N$. 
  We now set
  \begin{align*}
    f'& := E^n(f_n) \in \Mor_C(X_n,Y_n)\\
    g' &:= E^n(g_n) \in \Mor_C(X_n,Y_n)
  \end{align*}
  and show the following:
  \begin{enumerate}
  \item There exist isomorphisms~$\psi$ and~$\varphi$ in~$C$ with
    \begin{align*}
      f' & = \psi \circ f \circ \varphi \\
      g' & = \psi \circ g \circ \varphi.
    \end{align*}
  \item We have~$F(f') \neq F(g')$.
  \item We have~$F \circ E^{m-n-1}(E(f')) \neq F \circ E^{m-n-1}(E(g'))$.
  \item We have~$E(f') \neq E(g')$.
  \item We have~$E(f) \neq E(g)$ (which proves that $E$ is faithful).
  \end{enumerate}

  \emph{Ad~(1).}
  We can take
  \begin{align*}
    \psi & := E^{n-1}(\psi_{n-1}^{-1}) \circ E^{n-2}(\psi_{n-2}^{-1}) \circ \dots \circ E(\psi_1^{-1}) \circ \psi_0^{-1} 
    \\
    \varphi & := \varphi_0 \circ E(\varphi_1) \circ \dots \circ E^{n-2}(\varphi_{n-2}) \circ E^{n-1}(\varphi_{n-1})
    ,
  \end{align*}
  which clearly are $C$-isomorphisms with the claimed property.
  
  \emph{Ad~(2).}
  This follows from~(1) and the fact that~$F(f) \neq F(g)$.

  \emph{Ad~(3).}
  By construction,
  \begin{align*}
    F \circ E^{m-n-1}\bigl(E(f')\bigr)
    & = F \circ E^{m-n-1} \circ E \circ E^n (f_n) 
      = F \circ E^m (f_n) \\
    & = F \circ E^n (f_n) & \text{\hspace{-5cm}(because $F \circ E^m = F \circ E^n$)}\\
    & = F(f')
  \end{align*}
  and, analogously,
  \[ F \circ E^{m-n-1}\bigl(E(g')\bigr)
     = F(g').
  \]
  Therefore, we can use~(2) to prove~(3).
  
  \emph{Ad~(4).}
  This is an immediate consequence of~(3).

  \emph{Ad~(5).}
  By~(1), we have
  \begin{align*}
    E(f) & = E(\psi^{-1} \circ f' \circ \varphi^{-1})
    = E(\psi^{-1}) \circ E(f') \circ E(\varphi^{-1}) \\
    E(g) & = E(\psi^{-1} \circ g' \circ \varphi^{-1})
    = E(\psi^{-1}) \circ E(g') \circ E(\varphi^{-1}).
  \end{align*}
  Moreover, (4) shows that $E(f') \neq E(g')$. Because $E(\psi^{-1})$
  and $E(\varphi^{-1})$ are isomorphisms in~$C$, we conclude that $E(f) \neq E(g)$.
\end{proof}

\subsection{Solving the word problem}

\begin{thm}\label{thm:wordproblem}
  Let $(X,R)$ be a finite presentation of a residually finite category.
  Then the word problem is solvable for~$(X,R)$ (via an explicit algorithm,
  specified in the proof).
\end{thm}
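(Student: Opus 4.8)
The plan is to adapt Mostowski's classical argument for groups: we run, in parallel, one semi-algorithm that will eventually detect equality and one that will eventually detect inequality, and we observe that for every instance exactly one of the two terminates. Recall the setup: a finite presentation $(X,R)$ consists of a finite quiver $X=(V,E,s,t)$ together with a finite set $R$ of pairs $(r_1,r_2)$ of parallel morphisms of the free category~$F_X$ (finite paths in~$X$ with equal source and equal target), and the presented category~$C$ is the quotient of~$F_X$ by the smallest congruence~$\sim$ containing~$R$. A congruence on a category does not identify objects, so $C$ and~$F_X$ have the same objects, namely~$V$. An instance of the word problem is a pair of morphisms $f,g$ of~$F_X$, given as explicit finite paths in~$X$. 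If $f$ and $g$ do not have the same source and the same target, we output \emph{unequal} (correct, since then $f$ and $g$ are already distinct morphisms of~$C$), so from now on we may assume $f$ and $g$ are parallel and must decide whether $f\sim g$.

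\emph{A semi-algorithm detecting equality.} The congruence~$\sim$ is the reachability relation of the one-step rewriting that replaces a factor of a path equal to~$r_1$ by~$r_2$, or a factor equal to~$r_2$ by~$r_1$, for some $(r_1,r_2)\in R$. Since $X$ and~$R$ are finite, only finitely many one-step rewrites apply to any given path, so a breadth-first search started at~$f$ enumerates, stage by stage, the (possibly infinite) $\sim$-class of~$f$ effectively. As soon as~$g$ is produced, halt with output \emph{equal}. This halts if and only if $f\sim g$.

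\emph{A semi-algorithm detecting inequality.} We effectively enumerate all pairs $(D,F)$ in which $D$ is a finite category and $F\colon F_X\to D$ is a functor respecting~$R$. For $n=1,2,\dots$ we list all categories with object set $\{1,\dots,n\}$ and finite hom-sets: for each bound on hom-set sizes there are only finitely many candidate composition tables, and the category axioms are decidable; this enumerates all finite categories up to isomorphism. For such a~$D$, a functor $F\colon F_X\to D$ is precisely a choice of $F(v)\in\Ob(D)$ for each $v\in V$ together with a choice of $F(e)\in\Mor_D\bigl(F(s(e)),F(t(e))\bigr)$ for each $e\in E$ (finitely many choices), and $F$ respects~$R$ iff $F(r_1)=F(r_2)$ in~$D$ for every $(r_1,r_2)\in R$, which is checkable by evaluating $F$ along the relevant paths. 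Whenever such a pair $(D,F)$ is found for which, moreover, $F(f)\neq F(g)$ in~$D$, halt with output \emph{unequal}. If this halts, then $F$ factors through~$C$, so $f$ and~$g$ have distinct images in~$C$, i.e.\ $f\not\sim g$. Conversely, if $f\not\sim g$, then since $C$ is residually finite there is a finite category~$D$ and a functor $\overline F\colon C\to D$ with $\overline F(f)\neq\overline F(g)$; composing with the canonical functor $F_X\to C$ produces a pair $(D,F)$ respecting~$R$ and separating $f$ and~$g$, and the enumeration will eventually reach it. So this procedure halts if and only if $f\not\sim g$.

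\emph{Conclusion and main obstacle.} Interleave the two semi-algorithms; since either $f\sim g$ or $f\not\sim g$, exactly one of them halts, and the algorithm returns its verdict. There is no deep obstacle: the one point requiring care is to make the second enumeration genuinely effective — listing all finite categories together with all functors out of~$F_X$ that respect~$R$ — but this is routine precisely because $F_X$ admits the explicit combinatorial description via paths in~$X$, so a functor out of~$F_X$ is finite data, and finiteness of $X$ and $R$ keeps every stage of both searches finitely branching.
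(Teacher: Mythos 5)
Your proposal is correct and follows essentially the same route as the paper's proof: interleave a semi-decision procedure that enumerates the congruence generated by~$R$ (detecting $\overline p = \overline q$) with one that enumerates finite categories together with the finitely many functors out of~$F_X$ respecting~$R$, using residual finiteness to guarantee that a separating functor is eventually found when $\overline p \neq \overline q$. The only differences are cosmetic (breadth-first rewriting instead of enumerating the congruence relation directly, and the explicit preliminary check on sources and targets), so there is nothing to add.
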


Let us first recall the corresponding notions: As in the case of groups,
presentations of categories are defined via quotient categories
of free categories~\cite[Chapter~II.8]{maclane}. 

\begin{defi}[finite presentation]
  A \emph{finite presentation} of a category is a pair~$(X,R)$
  consisting of
  \begin{itemize}
  \item a finite quiver~$X$,
  \item a finite set~$R$ of finite (directed) paths in~$X$.
  \end{itemize}
  The category~$\genrel XR$ \emph{presented} by such a finite presentation~$(X,R)$
  is the quotient category of the free category~$F_X$, freely generated by~$X$,
  modulo the smallest congruence relation on morphisms of~$F_X$ containing~$R$.
\end{defi}

\begin{defi}[solvability of the word problem]
  Let $(X,R)$ be a finite presentation of a category. Then the
  \emph{word problem for~$(X,R)$ is solvable} if the following holds: There
  exists an algorithm that given as input two finite (directed) paths in~$X$
  (specified as finite lists of directed edges) decides whether the
  morphisms in the category~$\genrel XR$ represented by these paths are equal
  or not. 
\end{defi}

\begin{proof}[Proof of Theorem~\ref{thm:wordproblem}]
  Again, we proceed as in the corresponding result for groups.  Let $C
  := \genrel XR$, let $p$ and $q$ be finite directed paths in~$X$, and
  let $\overline p$ and $\overline q$ be the morphisms in~$C$ represented
  by~$p$ and $q$, respectively.
  
  We simultaneously perform the following tasks (e.g., by interleaving):
    \begin{itemize}
    \item We enumerate the congruence relation~$\overline R$ on the
      morphisms of the free category~$F_X$ generated by~$R$ and check
      whether~$(p,q)$ belongs to (these initials of)~$\overline R$.
      If $(p,q) \in \overline R$, then the answer is \emph{yes}
      (i.e., $\overline p = \overline q$).
    \item We diagonally enumerate all natural numbers~$n,m$ and all
      categories with object set~$\{0,\dots,n\}$ and morphism
      set~$\{0,\dots, m\}$.  For each such finite category~$D$, we
      construct the finite set of functors~$F \colon \genrel XR
      \longrightarrow D$ (using the universal property of~$\genrel
      XR$) and its composition~$\overline F \colon F_X \longrightarrow
      D$ with the canonical projection functor~$F_X \longrightarrow
      \genrel XR = C$. For each such functor~$F$, we check
      whether~$\overline F(p) = \overline F(q)$. If $\overline F(p)
      \neq \overline F(q)$, then the answer is \emph{no} (i.e.,
      $\overline p \neq \overline q$).
    \end{itemize}

  We briefly explain why this algorithm is correct and terminates
  after a finite number of steps. To this end, we distinguish the
  following cases:
  \begin{itemize}
  \item
    If $\overline p = \overline q$,
    then $(p,q) \in \overline R$; hence, $(p,q)$ will be found after
    a finite number of enumeration steps of~$\overline R$.

    Moreover, because $\overline p = \overline q$, the morphisms~$p$
    and $q$ cannot be separated by a functor in the second branch of
    the algorithm. Hence, the algorithm correctly terminates
    with~\emph{yes}.
  \item
    If $\overline p \neq \overline q$, then we can invoke residual
    finiteness of~$C$: There exists a finite category~$D$ and a
    functor~$F \colon C \longrightarrow D$ with~$F(\overline p) \neq
    F(\overline q)$. Composing $F$ with the canonical projection
    functor, leads to a functor~$\overline F \colon F_X \longrightarrow D$
    such that
    \[ \overline F(p) = F(\overline p) \neq F(\overline q) = \overline F(q).
    \]
    Because every finite category is isomorphic to one of the categories
    enumerated in the second branch of the algorithm, such a separating
    functor will be found in a finite number of steps.

    Moreover, because $\overline p \neq \overline q$, we have $(p,q)
    \not\in \overline R$ and thus the algorithm will not stop in the first
    branch. Hence, the algorithm correctly terminates with \emph{no}.
    \qedhere
  \end{itemize}
\end{proof}


\medskip
\vfill

\noindent
\emph{Clara L\"oh}\\[.5em]
  {\small
  \begin{tabular}{@{\qquad}l}
    Fakult\"at f\"ur Mathematik,
    Universit\"at Regensburg,
    93040 Regensburg\\
    \textsf{clara.loeh@mathematik.uni-r.de},\\
    \textsf{http://www.mathematik.uni-r.de/loeh}
  \end{tabular}}


\begin{thebibliography}{100}
  
  \bibitem{csc} T.~Ceccherini-Silberstein,
    M.~Coornaert. \emph{Cellular Automata and Groups}, Springer
    Monographs in Mathematics, Springer, 2010.

  \bibitem{golubov}
    \`E.~A.~Golubov.
    Finite separability in semigroups, 
    \emph{Dokl.\ Akad.\ Nauk SSSR},~189, 20--22, 1969.

  \bibitem{lambekscott}
    J.~Lambek, P.~J.~Scott.
    \emph{Introduction to higher order categorical logic}, 
    Cambridge Studies in Advanced Mathematics,~7,
    Cambridge University Press, 1986. 

  \bibitem{lewin}
    J.~Lewin.
    Subrings of finite index in finitely generated rings, 
    \emph{J.~Algebra},~5, 84--88, 1967.
    
  \bibitem{maclane}
    S.~MacLane.
    \emph{Categories for the Working Mathematician},
    Graduate Texts in Mathematics, 5, 
    second edition, Springer, 1998.

  \bibitem{magnus}
    W.~Magnus.
    Residually finite groups. 
    \emph{Bull.\ Amer.\ Math.\ Soc.},~75, 305--316, 1969.
    
  \bibitem{malcev}
    A.~Malcev.
    On isomorphic matrix representations of infinite groups, 
    \emph{Rec.\ Math.\ [Mat.\ Sbornik]~N.S.},~8(50), 405--422, 1940.

  \bibitem{mostowski}
    A.~W.~Mostowski.
    On the decidability of some problems in special classes of groups, 
    \emph{Fund.\ Math.},~59, 123--135, 1966.
    
  \bibitem{nica}
    B.~Nica.
    Linear groups -- Malcev's theorem and Selberg's lemma,
    preprint, 2013.
    \textsf{arXiv:1306.2385 [math.GR]} 
 
  \bibitem{orzechribes}
    M.~Orzech, L.~Ribes.
    Residual finiteness and the Hopf property in rings,  
    \emph{J.~Algebra},~15, 81--88, 1970.
    
  \bibitem{smullyanfitting}
    R.M.~Smullyan, M.~Fitting.
    \emph{Set theory and the continuum problem}, 
    revised edition, Dover, 2010.
    \\
    Errata: \textsf{http://melvinfitting.org/errata/errata.html}

  \bibitem{varadarajan}
    K.~Varadarajan.
    Residual finiteness in rings and modules,
    \emph{J.~Ramanujan Math.\ Soc.},~8(1--2), 29--48, 1993.
    
\end{thebibliography}
\end{document}